\documentclass[10pt]{amsart}
\usepackage{latexsym,amsmath,amssymb}
\usepackage{mathrsfs}
\usepackage{mathabx}
 \usepackage{indentfirst}
 \usepackage{galois}
 \usepackage{color}
 \setlength\evensidemargin{.5in}
 \setlength\textheight{8.1in}
 \setlength\textheight{44cc}
 \setlength\textwidth{30cc}
 \setlength\topmargin{0in}
 \setlength\parskip{5pt}
\setlength{\parindent}{1em}

 \renewcommand{\epsilon}{\varepsilon}
 \newcommand{\newsection}[1]
  {\section{#1}\setcounter{theorem}{0} \setcounter{equation}{0}\par\noindent}


   \newtheorem{theorem}{Theorem}[section]
   \newtheorem{lemma}[theorem]{Lemma}
 \newtheorem{corr}[theorem]{Corollary}
 
 \newtheorem{proposition}[theorem]{Proposition}
 \newtheorem{deff}[theorem]{Definition}
 \newtheorem{remark}[theorem]{Remark}

  \numberwithin{equation}{section}



 \newcommand{\bth}{\begin{theorem}}
 \newcommand{\ble}{\begin{lemma}}
 \newcommand{\bcor}{\begin{corr}}
 \newcommand{\bdeff}{\begin{deff}}
 \newcommand{\bprop}{\begin{proposition}}
 \def\be{\begin{equation}}
\def\ee{\end{equation}}
\def\bt{\begin{theorem}}
\def\et{\end{theorem}}
\def\ba{\begin{array}}
\def\ea{\end{array}}
\def\bl{\begin{lemma}}
\def\el{\end{lemma}}


 \newcommand{\ele}{\end{lemma}}
 \newcommand{\ecor}{\end{corr}}
 \newcommand{\edeff}{\end{deff}}
 
 \newcommand{\eprop}{\end{proposition}}

 \renewcommand{\Pi}{\varPi}

 \renewcommand{\epsilon}{\varepsilon}

\title[The Einstein-Klein-Gordon system] {Global regularity for Einstein-Klein-Gordon system with $U(1) \times \mathbb{R}$ isometry group, \uppercase\expandafter{\romannumeral1}}

\date{\today}

\begin{document}
\maketitle

\centerline{
\author{Haoyang Chen
  \footnote{School of Mathematical Sciences, Fudan University, Shanghai, China. {\it Email: 15110180018@fudan.edu.cn }}
 }
 \and
  Yi Zhou
  \footnote{ *Corresponding Author: School of Mathematical Sciences, Fudan University, Shanghai, China.
 {\it Email: yizhou@fudan.edu.cn}
 }
  }

\begin{abstract}
This is the first of the two papers devoted to the study of global regularity of the 3+1 dimensional Einstein-Klein-Gordon system with a $U(1)\times \mathbb{R}$ isometry group. In this first part, we reduce the Cauchy problem of the Einstein-Klein-Gordon system to a 2+1 dimensional system. Then, we will give energy estimates and construct the null coordinate system, under which we finally show that the first possible singularity can only occur at the axis.
\end{abstract}
{\bf Keywords: }Einstein-Klein Gordon system, Cauchy problem, energy estimates, null coordinate, first singularity.

{\bf2010 MSC: } 35Q76; 35L70

\newsection{Introduction}
\subsection{Introduction and previous results}
Let $(^{(4)}{M},^{(4)}{g})$ be a 3+1 dimensional globally hyperbolic Lorentzian manifold which satisfies the following Einstein-scalar field equations:
\begin{equation}\label{1.1}
\begin{cases}
^{(4)}{G}_{\mu \nu}=^{(4)}{T}_{\mu\nu}=\partial_\mu \phi \partial_\nu \phi-\frac{1}{2} {^{(4)}{g}}_{\mu \nu}\partial^{\lambda} \phi \partial_{\lambda} \phi-{^{(4)}{g}}_{\mu\nu}V(\phi) \\
\Box_{^{(4)}{g}} \phi=V'(\phi)
\end{cases}
\end{equation}
where $^{(4)}{g}$ is the Lorentzian metric, $^{(4)}{G}_{\mu \nu}$ is the Einstein tensor of $^{(4)}{g}$, $^{(4)}{T}_{\mu\nu}$ is the stress-energy tensor given as above, $\phi$ is the scalar field, and we take the potential $V(\phi)=\frac{1}{2}m^2 \phi^2$.

Then, the equation that the scalar field satisfies is a Klein-Gordon equation, which makes the system an Einstein-Klein-Gordon system. The Einstein-Klein-Gordon system \eqref{1.1}  is equivalent to the following equations,
\begin{equation}\label{1.2}
\begin{cases}
^{(4)}{R}_{\mu \nu}=^{(4)}{\rho}_{\mu\nu} \triangleq \partial_\mu \phi \partial_\nu \phi+{^{(4)}{g}}_{\mu\nu} \frac{1}{2}m^2 \phi^2\\
\Box_{^{(4)}{g}} \phi=m^2 \phi
\end{cases}
\end{equation}
where $^{(4)}{R}_{\mu \nu}$ is the Ricci curvature tensor.

One fundamental open problem in the field of general relativity is the cosmic censorship conjectures by Penrose. Roughly speaking, the weak cosmic censorship may be formulated as follows: For generic asymptotically flat Cauchy data, of the vacuum equations or suitable Einstein-matter systems, the maximal development possesses a complete future null infinity. While the strong cosmic censorship states that the maximal Cauchy development is inextendible for generic initial data. This question is partially related to the study of the formation of event horizons. Although still open in general, there is a series of results in spherical symmetric case by Christodoulou for the Einstein equations coupled with a massless scalar field. On the other hand, of our interest, it is related to the global well-posedness of the Cauchy problem in large.

We aim to show the  global regularity of the above 3+1 dimensional Einstein-Klein-Gordon systems with a $U(1)\times \mathbb{R}$ isometry group, while in this paper we will firstly give several results in preparation for the subsequent study on the global well-posedness.

 Our study is motivated by research on the vacuum Einstein equations with spacelike Killing vector fields which is related to the study of wave map systems. We first review some related results on the vacuum case and the wave map systems.

For the 3+1 dimensional vacuum Einstein equations with one spacelike Killing field, as we can see in \cite{choquet3} and \cite{Moncrief1}, the Einstein equations can be reduced to a 2+1 dimensional Einstein-wave map system on a 2+1 dimensional Lorentzian manifold where the target manifold is the hyperbolic space $\mathbb{H}^2$. Yet the global existence problem of the Einstein-wave map system is still open. As a first step towards this global existence conjecture, Andersson, Gudapati and Szeftel proved that the global regularity holds for the equivariant case in \cite{Andersson}, by reference to some pioneering work on equivariant wave maps.

Shatah and Tahvildar-Zadeh have proved the global regularity for 2+1 dimensional equivariant wave maps with the target geodesically convex in \cite{shatah2}. This condition was later relaxed by Grillakis to include a certain class of nonconvex targets, see \cite{grillakis}. Their proof of regularity was also simplified later by Shatah and Struwe in \cite{shatah1}. They gave several more results on equivariant wave maps in the areas of existence and uniqueness, regularity, asymptotic behavior, development of singularities, and weak solutions, see \cite{shatah3}. Further, as an improvement of these above results, for target manifolds that do not admit nonconstant harmonic spheres, global existence of smooth solutions to the Cauchy problem for corotational wave maps with smooth equivariant data was shown by Struwe in \cite{Struwe1}.

 Then, for the 3+1 dimensional vacuum Einstein equations with $G_2$ symmetry, it was shown in \cite{berger} that the system reduce to a spherically symmetric wave map $u:\, \mathbb{R}^{2+1} \to \mathbb{H}^2$, where $\mathbb{R}^{1+2}$ is the 2+1 dimensional Minkowski spacetimes and the target $\mathbb{H}^2$ is the hyperbolic space. Thus the global regularity can be proved by the work of Christodoulou and Tahvildar-Zadeh \cite{christodoulou2} on 2+1 dimensional spherically symmetric wave maps. In \cite{christodoulou2}, the range of the wave map $u$ should be contained in a convex part of the target $N$. This restriction was later shown unnecessary by Struwe in \cite{Struwe3} as the target is the standard sphere. Further, Struwe give a more general result in \cite{Struwe2}, where the target is any smooth, compact Riemannian manifold without boundary. We refer to \cite{geba} for more results and references of wave maps.

\subsection{The 3+1 dimensional spacetime with $U(1) \times \mathbb{R}$ isometry group}
In this paper, we work on the Lorentzian manifold $^{(4)}{M}=\mathbb{R} \times \mathbb{R}^2 \times \mathbb{R}$ with a Lorentzian metric $^{(4)}{g}$ on it, and we consider the polarized case where the Killing fields of $(^{(4)}{M},^{(4)}{g})$ are hypersurface orthogonal. Then, with the existence of the translational Killing vector, the metric can be written in the following form,
\begin{equation*}
^{(4)}{g}=e^{-2\gamma}{^{(3)}g}+e^{2\gamma}{(dx^3)}^2
\end{equation*}
where $\partial_{x^3}$ is the translational spacelike Killing vector field.

As we mentioned before in \cite{choquet3} and \cite{Moncrief1}, the 3+1 dimensional vacuum Einstein equations with spacelike Killing field reduce to a 2+1 dimensional Einstein-wave map system with the target manifold $\mathbb{H}^2$. We will give a similar reduction to a 2+1 dimensional Einstein-wave-Klein-Gordon system in section 2, where $\gamma$ in $^{(4)}{g}$ satisfies a wave equation and the scalar field $\phi$ satisfies a Klein-Gordon equation.

In the vacuum case with $G_2$ symmetry, the wave maps equations  reduced from the Einstein equations is a semilinear wave equations, for instance, special solutions of this case are the Einstein-Rosen waves, see \cite{cecile3} and references therein. While the major difficulty in our problem is that the wave equations are coupled with Einstein equations, which make the system quasilinear. However, we develop a new way to solve this problem in 2+1 dimension with symmetry.

Particularly, if the scalar field is massless, we can remove the condition that the Killing vector field $\partial_{x^3}$ is hypersurface orthogonal, and the metric will take the general form
\begin{equation*}
^{(4)}{g}=e^{-2\gamma}{^{(3)}g}+e^{2\gamma}{(dx^3+A_{\alpha}dx^{\alpha})}^2.
\end{equation*}
We will mention in section 2 that the system reduce to a wave map equations coupled with a linear wave equation on the Minkowski spacetimes, of which the problem left is to study the wave map system, same as in the vacuum case.

Now we assume that the reduced spacetime $(^{(3)}M,^{(3)}g)$ is a globally hyperbolic 2+1 dimensional spacetime with Cauchy surface diffeomorphic to $\mathbb{R}^2$, on which the reduced Einstein-wave-Klein-Gordon system is radially symmetric. And we assume that the $U(1)$ action on $M$ is generated by a hypersurface orthogonal Killing field $\partial_{\theta}$. In particular, we write the metric $^{(3)}g$ in the following form in this paper
\begin{eqnarray} \label{1.3}
^{(3)}g&=&-e^{2\alpha(t,r)} dt^2+e^{2\beta(t,r)}dr^2+r^2 d \theta^2\\ \nonumber
& \triangleq &\check{g}+r^2 d \theta^2
\end{eqnarray}
 where $\check{g}$ is a metric on the orbit space  $\mathcal{Q}=M/ {{\mathbb{S}}^1}$ and $r$ is the radius function, defined such that $2\pi r(p)$ is the length of the ${\mathbb{S}}^1$ orbit through $p$.

\subsection{The Cauchy data}
As we mentioned before, to study the Cauchy problem of the 3+1 dimensional Einstein-Klein-Gordon system, we can equivalently consider the Cauchy problem of the reduced 2+1 dimensional Einstein-wave-Klein-Gordon system.

Now we introduce the definition of the Cauchy data set for the 2+1 dimensional Einstein-wave-Klein-Gordon systems with $U(1)$ isometry group as follows,
\begin{deff}[Cauchy data set for the 2+1 dimensional Einstein-wave-Klein-Gordon system with $U(1)$ isometry group] \label{def1.5}
A Cauchy data set for the 2+1 dimensional Einstein-wave-Klein-Gordon system with a $U(1)$ isometry group is a 7-tuple $(M_0,g_0,K,\gamma_0,\gamma_1,\phi_0,\phi_1)$ consisting of a Remannian 2-manifold $(M_0,g_0)$ with a spacelike rotational Killing vector field and a 2-tensor $K$ which is the second fundamental form and symmetric under the same action, $\gamma_0,\gamma_1$ are initial data for the wave equation that $\gamma$ satisfies, $\phi_0,\phi_1$ are initial data for the Klein-Gordon equation that the scalar field satisfies. $g_0, K$ are functions of $r$ only and the following constraints equations hold:
\begin{equation} \label{1.4}
R_0-K_{\alpha\beta}K^{\alpha\beta}+(trK)^2=2{^{(3)}}T_{\alpha\beta} n^\alpha n^\beta
\end{equation}
\begin{equation} \label{1.5}
D^{\beta} K_{\alpha\beta}-D_{\alpha} K^{\beta}_{\beta}={^{(3)}}{T}_{\alpha\mu} n^{\mu}
\end{equation}
where $n^{\alpha}(t,r)$ is the future directed unit normal, $R_0$ is the scalar curvature on $M_0$,$D_{\alpha}$ is the intrinsic covariant derivative on $M_0$, and $^{(3)}T_{\alpha\beta}$ is the stress-energy tensor for the reduced system.
\end{deff}

For a smooth solution of the 2+1 dimensional Einstein-wave-Klein-Gordon system with $U(1)$ symmetry, it must hold that $\alpha(t,r),\beta(t,r)$ are even functions of $r$. And we give some normalisation of the metric functions $\alpha(t,r)$ and $\beta(t,r)$ on the axis. It must hold that $\beta(t,0)=0$, in order to avoid a conical singularity at the axis $\Gamma$, which means that the perimeter of a circle
of radius $r$ grows like $2\pi cr$ at the axis, instead of $2\pi r$ in the
Euclidean metric. This condition can be realized by appropriately choosing the Cauchy data such that $\beta(0,0)=0$, see section 2. Further, $\alpha(t,0)$ is determined only up to a choice of time parametrization. We shall choose a time coordinate such that $\alpha(t,0)=0.$

Finding solutions to the constraint equations is a research area in itself. Note that Cecile has proved the existence of such constraint equations in vacuum with translational Killing vector field in \cite{cecile1}\cite{cecile2}, which is used in \cite{cecile3} to prove stability in exponential time of the Minkowski spacetime in this setting. In our case, we will just briefly show that such data exists, without going further into the study of the constraints.

We will construct an asymptotically flat\footnote{We say the Cauchy data is 'asymptotically flat' in the sense of Andersson\cite{Andersson} here and hereafter.} Cauchy data set which satisfies the following initial conditions
\begin{equation} \label{1.19}
\begin{cases}
\gamma_t(0,r)=\gamma_1(r) \geq 0,\\
\gamma(0,r)=\gamma_0(r) \geq 0,\quad\gamma_r(0,r)=\gamma_0'(r)> -\frac{1}{2} r^{-1}.
\end{cases}
\end{equation}
\begin{remark} \label{remark1.1}
The following initial condition in \eqref{1.19}
\begin{equation*}
\gamma_r(0,r)=\gamma_0'(r)> -\frac{1}{2} r^{-1}
\end{equation*}
can be replaced by
\begin{equation*}
\gamma_r(0,r)=\gamma_0'(r)> -C r^{-1}
\end{equation*}
with $C$ an arbitrary positive constant. Here, we take $C=\frac{1}{2}$ just for simplicity.
\end{remark}

For metrics of the form \eqref{1.3}, it can be calculated that the second fundamental form $K$ of a Cauchy $t$-level takes the following form
\begin{displaymath}
K=K_{rr} dr^2,
\end{displaymath}
where $K_{rr}=e^{-\alpha+2\beta} \beta_t$.

For the reduced 2+1 dimensional Einstein-wave-Klein-Gordon system(see section 2), the constraint equations \eqref{1.4}\eqref{1.5} take the following form in local coordinates,
\begin{displaymath}
\begin{cases}
2e^{-2\beta} r^{-1} \beta_r=2( e^{-2\alpha} {\gamma_t}^2+e^{-2\beta}{\gamma_r}^2+\frac{1}{2}e^{-2\alpha} {\phi_t}^2+\frac{1}{2}e^{-2\beta}{\phi_r}^2+\frac{m^2}{2} e^{-2\gamma} \phi^2),\\
r^{-1}e^{-\alpha} \beta_t=2 e^{-\alpha} {\gamma_t}\gamma_r+e^{-\alpha}\phi_t{\phi_r}.
\end{cases}
\end{displaymath}

If we take $\beta_t=0$ and $\gamma_t=0$, the constraint equations become
\begin{displaymath}
\begin{cases}
2e^{-2\beta} r^{-1} \beta_r=2(e^{-2\beta}{\gamma_r}^2+\frac{1}{2}e^{-2\alpha} {\phi_t}^2+\frac{1}{2}e^{-2\beta}{\phi_r}^2+\frac{m^2}{2} e^{-2\gamma} \phi^2),\\
e^{-\alpha}\phi_t{\phi_r}=0.
\end{cases}
\end{displaymath}
\begin{remark} \label{remark1.2}
Noting that $\beta_t=0$ means that $M_0$ is a totally geodesic submanifold. Moreover, what is more interesting, a direct calculation shows that the additional condition $\gamma_t=0$ makes the corresponding Cauchy hypersurface of the 3+1 Einstein-Klein-Gordon system a totally geodesic submanifold.
\end{remark}
Noting that all quantities above are functions of $r$ only. Thus, we can properly choose the initial data sets for $\gamma$ and $\phi$ which is compatible with \eqref{1.19} such that the second equation holds. Here, we set $\phi_t=0$. What left to be done is solving the first equation, which is just an ordinary differential equation. Noting that it is equivalent to the following equation
\begin{displaymath}
-\partial_r (e^{-2\beta}) =2r(e^{-2\beta}{\gamma_r}^2+\frac{1}{2}e^{-2\beta}{\phi_r}^2+\frac{m^2}{2} e^{-2\gamma} \phi^2)
\end{displaymath}
which is a first order linear differential equation of $e^{-2\beta}$, and with the condition $\beta(0,0)=0$, the solution takes the form
\begin{displaymath}
\beta(r) =\frac{1}{2}e^{\int_0^r (2 \xi {\gamma_r}^2+\xi {\phi_r}^2) d \xi}-\frac{1}{2}\ln{(1-\int_0^r \xi m^2 e^{-2\gamma} \phi^2 e^{\int_0^{\xi} (2 \eta {\gamma_r}^2+\eta {\phi_r}^2) d \eta} d \xi)}.
\end{displaymath}
Thus, properly choose the initial data sets for $\gamma$ and $\phi$ such that
\begin{equation} \label{cd1}
\int_0^r \xi m^2 e^{-2\gamma} \phi^2 e^{\int_0^{\xi} (2 \eta {\gamma_r}^2+\eta {\phi_r}^2) d \eta} d \xi<1,
\end{equation}
and
\begin{equation} \label{cd2}
\phi=\mathcal{O}(r^{-\frac{11}{8}}),\quad \gamma=\mathcal{O}(r^{-\frac{11}{8}})
\end{equation}
as $r \to \infty$. The first condition \eqref{cd1} guarantees that the initial energy is finite. The second condition \eqref{cd2} on asymptotic behavior guarantees the asymptotic flatness, which is to say that
$$\beta=\beta_{\infty}+\tilde{\beta}.$$
Here, $\beta_{\infty}$ is a constant and $(\tilde{\beta},K) \in  H^{s+1}_{\delta}\times H^s_{\delta+1}$ with $H^{s+1}_{\delta}$ and $H^s_{\delta+1}$ the weighted Sobolev space(see \cite{Andersson}\cite{cecile1}) where we take $\delta=-\frac{1}{2}$ and $s>1$. Thus, we finally give a non-trivial solution of the constraint equations.
\begin{remark} \label{remark1.3}
If we eliminate the condition $\gamma_t=0$, it is also easy to give a construction in a similar way by properly choosing $\gamma_t$ and $\phi_t$ which have the same asymptotic behavior as above.
\end{remark}

\subsection{The problem of global well-posedness}
The proof by Choquet-Bruhat and Geroch(see \cite{choquet1}\cite{choquet2}) of existence and uniqueness of maximal solutions to the Cauchy problem for the vacuum Einstein equations, together with the equivalence of the Cauchy data, can be generalized to our case as follows, which guaranteed the local well-posedness.
\begin{theorem} \label{thm1.5}
Let $(M_0,g_0,k,\gamma_0,\gamma_1,\phi_0,\phi_1)$ be the Cauchy data set for the 2+1 dimensional Einstein-wave-Klein-Gordon system with $U(1)$ isometry group. Then there is a unique, maximal Cauchy development $(^{(3)}M,^{(3)}g,\gamma,\phi)$ satisfying the the 2+1 dimensional Einstein-wave-Klein-Gordon system.
\end{theorem}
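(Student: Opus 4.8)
The plan is to follow the classical Choquet-Bruhat–Geroch strategy, adapting it to the reduced radially symmetric setting. First I would fix the gauge already built into the metric ansatz \eqref{1.3}, together with the normalizations $\alpha(t,0)=0$ and $\beta(t,0)=0$, and rewrite the reduced Einstein-wave-Klein-Gordon equations (derived in Section 2) as a coupled system of quasilinear wave equations for the unknowns $\alpha,\beta,\gamma,\phi$ viewed as functions of $(t,r)$. Since all data depend on $r$ alone and are even in $r$, this is effectively a one-space-dimensional hyperbolic system with a coordinate degeneracy at the axis $r=0$: the equations for $\gamma$ and $\phi$ are of wave and Klein-Gordon type respectively, while the metric functions $\alpha,\beta$ are governed by the remaining reduced Einstein equations. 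Standard local well-posedness for quasilinear hyperbolic systems, set in the weighted Sobolev spaces $H^{s+1}_\delta\times H^s_{\delta+1}$ (with $\delta=-\tfrac12$, $s>1$) compatible with the asymptotic flatness fixed in Section 1.3, then produces a unique local-in-time smooth solution of the gauge-reduced system.

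Second, I would verify propagation of the constraints \eqref{1.4}--\eqref{1.5}. Because the reduced system descends from the $3+1$ Einstein-Klein-Gordon equations \eqref{1.2}, the contracted Bianchi identities together with the conservation law $\nabla^\mu\,{}^{(4)}T_{\mu\nu}=0$ (equivalently, the Klein-Gordon equation) force the constraint quantities to satisfy a homogeneous linear evolution system. Hence if the constraints hold on the initial slice $M_0$---as guaranteed by Definition \ref{def1.5} and the explicit construction of Section 1.3---they hold throughout the development, so the solution of the gauge-fixed system is a genuine solution of the geometric Einstein-wave-Klein-Gordon system.

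Third, for geometric uniqueness and maximality I would invoke the equivalence of Cauchy data to show that any two developments of the same data are isometric on a neighborhood of $M_0$, and then run the Zorn's-lemma argument of Choquet-Bruhat and Geroch on the partially ordered set of isometry classes of developments: every chain admits an upper bound obtained by gluing its members along their common isometric regions, so a maximal development exists, and by the local geometric uniqueness it is unique. This delivers the unique maximal Cauchy development $(^{(3)}M,{}^{(3)}g,\gamma,\phi)$ claimed in the statement.

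I expect the main obstacle to be the reduction step and the control near the axis rather than the abstract Zorn argument. The quasilinear coupling between the Einstein equations and the wave equations, emphasized in the introduction, means that the principal part of the system depends on the solution itself, so the energy estimates and domain-of-dependence considerations underlying local well-posedness must be arranged carefully; moreover, the coordinate degeneracy of \eqref{1.3} at $r=0$ requires the evenness of $\alpha,\beta$ in $r$ and the normalization $\beta(t,0)=0$ to exclude a conical singularity and keep the reduced system regular across the axis. Once local existence, uniqueness, and constraint propagation are established, the construction of the maximal development is routine.
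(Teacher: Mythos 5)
Your proposal is correct and follows essentially the same route as the paper, which gives no detailed proof of Theorem \ref{thm1.5} but simply asserts that the Choquet-Bruhat--Geroch argument (local well-posedness in a fixed gauge, constraint propagation via the Bianchi identities, and the Zorn's-lemma construction of the unique maximal development), combined with the equivalence of Cauchy data, generalizes to the reduced $2+1$ dimensional system. Your sketch is an accurate expansion of exactly that cited strategy, including the correct identification of the axis degeneracy at $r=0$ and the quasilinear coupling as the points requiring care.
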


Our goal is to prove the following theorem about the global regularity,
\begin{theorem} \label{thm1.6}
Let $(^{(3)}M,^{(3)}g)$ be the maximal Cauchy development of a regular Cauchy data set aforementioned in section 1.3. Then there is a global in time smooth solution for the Cauchy problem of the equations.
\end{theorem}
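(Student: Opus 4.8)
The plan is to argue by contradiction starting from the local theory. By Theorem~\ref{thm1.5} we have a unique maximal Cauchy development $(^{(3)}M,^{(3)}g,\gamma,\phi)$; suppose it is not global in time, so there is a finite maximal existence time $T^*$ beyond which $\gamma,\phi$ and the metric functions $\alpha,\beta$ in \eqref{1.3} admit no smooth extension. The first step is to establish a continuation criterion for the reduced Einstein-wave-Klein-Gordon system: a smooth solution persists as long as a suitable energy-type quantity and the geometry stay controlled, so that breakdown at $T^*$ must correspond to genuine concentration at some point $(T^*,r^*)$ of the orbit space $\mathcal{Q}=M/\mathbb{S}^1$. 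This reduces global regularity to ruling out every such first singularity.

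Second, I would use the conserved/monotone energy together with the null (double-null) coordinate system constructed in this paper to localize the putative singularity. Propagating energy along backward characteristics and bounding the flux through null hypersurfaces yields, away from the axis $\Gamma=\{r=0\}$, a small-energy regularity statement: in a shrinking truncated light cone with vertex $(T^*,r^*)$ and $r^*>0$, the local energy tends to zero, which by a Grillakis--Shatah--Struwe-type argument forces a smooth extension across the vertex. Hence no first singularity can occur at an interior point $r^*>0$, and the first possible singularity is confined to the axis --- precisely the conclusion reached in the body of this paper.

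The remaining and main step, carried out in the companion paper, is to exclude singularities on the axis $r^*=0$. Here I would perform a blow-up analysis: introduce self-similar coordinates centered at $(T^*,0)$ and rescale the radial solution, using the $U(1)$ equivariance to keep the spatial problem one-dimensional. The Klein-Gordon mass term $m^2\phi$ and the potential are subcritical under the hyperbolic self-similar rescaling and therefore disappear in the limit, so any nontrivial limit is a finite-energy equivariant harmonic map (or static/self-similar wave map) from a cone into the target $\mathbb{H}^2$. Since $\mathbb{H}^2$ is negatively curved and geodesically convex and admits no nonconstant harmonic spheres, such a map must be constant, contradicting the assumed energy concentration. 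Thus no singularity forms at the axis either, the solution extends smoothly past every finite time, and Theorem~\ref{thm1.6} follows.

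The hard part will be that, unlike the vacuum $G_2$ or flat equivariant wave-map settings, the present system is genuinely quasilinear: the coefficients $e^{2\alpha},e^{2\beta}$ enter the principal part of both the wave operator for $\gamma$ and the Klein-Gordon operator for $\phi$, and they are themselves determined by the matter through the constraint and evolution equations. Consequently the blow-up and small-energy arguments cannot be run with frozen coefficients; one must first show, using the constraints \eqref{1.4}--\eqref{1.5} and the assumed asymptotics \eqref{cd1}--\eqref{cd2}, that $\alpha$ and $\beta$ remain bounded and the metric \eqref{1.3} stays uniformly nondegenerate up to $T^*$, and that the coordinate degeneracy at $r=0$ is compatible with the normalizations $\beta(t,0)=0$ and $\alpha(t,0)=0$. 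Controlling this coupled geometry uniformly up to the singular time, so that the limiting harmonic-map obstruction genuinely applies, is the crux of the argument.
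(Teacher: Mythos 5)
Your proposal reproduces the paper's architecture faithfully at the level of a roadmap, but be aware that this paper itself does \emph{not} prove Theorem \ref{thm1.6}: everything here is preparatory (the reduction in Section 2, energy conservation and monotonicity, the global null coordinate system and Jacobian bounds, the lower bound $\gamma\ge -1$, and Proposition \ref{prop4.1} confining the first singularity to the axis), while the exclusion of axis singularities is deferred to the companion paper. So the only part of your plan that can be checked against an actual proof is the away-from-axis step, and there your mechanism differs from the paper's. You propose localized energy non-concentration at an interior vertex plus a Grillakis--Shatah--Struwe small-energy regularity argument; the paper instead exploits that for $r\ge R_0>0$ the system \eqref{1.16}--\eqref{1.17}, rewritten as \eqref{4.1}--\eqref{4.2}, is effectively $1{+}1$-dimensional: since the flux integrals of Proposition \ref{prop3.11} carry the weight $r$, which is bounded below away from $\Gamma$, a Cauchy--Schwarz estimate plus a bootstrap on $X=\sup|U_v|$ over a truncated cone of small scale $e$ absorbs the term $eX$ and yields pointwise bounds on $\partial U$ and then on $\partial\lambda$ from \eqref{1.15}, giving continuation directly. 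This is more elementary than your route and needs no concentration-compactness machinery; your route, if it works, would be more robust but is harder to justify for the quasilinear coupling, for exactly the reason you flag at the end.

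Two concrete gaps. First, your outline controls the metric coefficients $\alpha,\beta,\lambda$ but never bounds the matter field $\gamma$ from below, yet every null-ray integration of the mass terms requires controlling $e^{2\lambda-2\gamma}\phi^2$ and $e^{-2\gamma}\phi$; if $\gamma\to-\infty$ these are not integrable against the flux bounds. The paper devotes Proposition \ref{prop3.13} to precisely this, proving $\gamma\ge-1$ by a bootstrap on $r^{1/2}\tilde\gamma$ using the sign conditions \eqref{1.19} on the Cauchy data --- note that the conclusion of Theorem \ref{thm1.6} is tied to this restricted data class, not to arbitrary regular data. Second, your axis argument invokes rigidity of harmonic maps into $\mathbb{H}^2$, but in the polarized Einstein--Klein--Gordon reduction of this paper there is no $\mathbb{H}^2$-valued wave map: the unknowns are a scalar wave $\gamma$ and a Klein--Gordon field $\phi$ coupled through $e^{-2\gamma}\phi^2$ (the hyperbolic-target structure appears only in the massless non-polarized case discussed in Section 2). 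So the blow-up limit you would extract is not an equivariant harmonic map into $\mathbb{H}^2$, and the obstruction you cite does not apply as stated; whatever non-concentration argument Part II uses must be adapted to this scalar coupled system.
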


As preparation for the above theorem, in this paper we give several results that will be useful in the proof of the global regularity. The paper is organized as follows. In Section 2, we reduce the 3+1 dimensional Einstein-Klein-Gordon system with $U(1)\times \mathbb{R}$ isometry group to a 2+1 dimensional Einstein-wave-Klein-Gordon system on $(^{(3)}M,^{(3)}g)$ with $U(1)$ symmetry. In Section 3, using vector field methods, we give energy estimates of the systems. In Section 4, we give a null coordinate system. In Section 5, we will show that the first possible singularity must occur at the axis.

\section{Reduction to the 2+1 dimensional Einstein-wave-Klein-Gordon system}
Consider the Einstein equations \eqref{1.2}, where the spacetime $(^{(4)}{M},^{(4)}{g})$ admits a spacelike translational Killing vector field. The Einstein-Klein-Gordon system can be reduced to a 2+1 dimensional Einstein-wave-Klein-Gordon system in a similar way  as we know in \cite{gudapati}. For demonstration purposes, let us go ahead and perform the reduction. Given a spacelike hypersurface orthogonal Killing vector field, the metric can be written in the following form:
\begin{equation*}
^{(4)}{g}=e^{-2\gamma}{^{(3)}g}+e^{2\gamma}{(dx^3)}^2
\end{equation*}
where $\partial_{x^3}$ is the translational spacelike Killing vector field. In a similar way as in \cite{choquet1}, the 3+1 dimensional Einstein-scalar field system \eqref{1.2} on $(^{(4)}{M},^{(4)}{g})$ can be rewritten as
\begin{equation} \label{2.1}
^{(4)}{R}_{\alpha\beta}=\tilde{R}_{\alpha\beta}-\tilde{\nabla}_{\alpha} \partial_{\beta} \gamma-\partial_{\alpha} \gamma \partial_{\beta} \gamma=\partial_{\alpha} \phi \partial_{\beta} \phi+{\tilde{g}}_{\alpha\beta} \frac{1}{2}m^2 \phi^2
\end{equation}
\begin{equation} \label{2.2}
^{(4)}{R}_{\alpha3}=0
\end{equation}
\begin{equation} \label{2.3}
^{(4)}{R}_{33}=-e^{2\gamma} \left( \tilde{g}^{\alpha\beta} \partial_\alpha \gamma \partial_\beta \gamma+\tilde{g}^{\alpha\beta} \tilde{\nabla}_\alpha \partial_\beta \gamma \right)=e^{2\gamma}\frac{m^2}{2} \phi^2.
\end{equation}
where $\tilde{g}=e^{-2\gamma}{^{(3)}g}$ and \eqref{2.2} is trivial in polarized case.

Then, we give the following formulas about the conformal transformations of Ricci tensors and the wave operator,
 \begin{equation} \label{2.4}
 \sqrt{|\det{^{(3)}g}|}=e^{3\gamma}\sqrt{|\det{\tilde{g}}|}
 \end{equation}
 \begin{equation} \label{2.5}
 \Box_{^{(3)}g} u=\frac{1}{\sqrt{|\det{^{(3)}g}|}} \partial_{\beta} \left( \sqrt{|\det{^{(3)}g}|} {^{(3)}}{g}^{\alpha\beta} \partial_{\alpha} u \right)=e^{-2\gamma} \left( \Box_{\tilde{g}} u+ {\tilde{g}}^{\alpha\beta} \partial_{\beta} \gamma \partial_{\alpha} u \right)
 \end{equation}
 \begin{equation} \label{2.6}
 ^{(3)}R_{\alpha\beta}=\tilde{R}_{\alpha\beta}-{\tilde{g}}_{\alpha\beta} {\tilde{\nabla}}^{\sigma}{\tilde{\nabla}}_{\sigma} \gamma-{\tilde{\nabla}}_{\alpha}{\tilde{\nabla}}_{\beta} \gamma+{\tilde{\nabla}}_{\alpha} \gamma{\tilde{\nabla}}_{\beta} \gamma-{\tilde{g}}_{\alpha\beta}{\tilde{\nabla}}^{\sigma} \gamma{\tilde{\nabla}}_{\sigma} \gamma
 \end{equation}
 where $\alpha,\beta,\sigma=0,1,2$.

 Rewriting the equations \eqref{2.3} under the conformally transformation and it turns out the equations take the form
 \begin{equation} \label{2.7}
 ^{(3)}{\nabla}^{\sigma} \partial_\sigma \gamma =-e^{-2\gamma}\frac{m^2}{2} \phi^2
 \end{equation}
which is a wave equation.

 Equations \eqref{2.1} rewritten in a same way results in a 2+1 dimensional Einstein equations
 \begin{equation*}
 ^{(3)}R_{\alpha\beta}=2\partial_\alpha \gamma \cdot \partial_\beta \gamma+\partial_\alpha \phi \partial_\beta \phi+2e^{-2\gamma}{^{(3)}g}_{\alpha\beta}V(\phi) \triangleq ^{(3)}\rho_{\alpha\beta}
 \end{equation*}
 This is equivalent to
 \begin{equation} \label{2.8}
 ^{(3)}G_{\alpha\beta}=^{(3)}T_{\alpha\beta}\triangleq ^{(3)}\rho_{\alpha\beta}-\frac{1}{2}tr(^{(3)}\rho) ^{(3)}g_{\alpha\beta}.
 \end{equation}
 where ${^{(3)}T}_{\alpha\beta}$ is the stress-energy tensor. And the wave equation that the scalar field satisfies can be rewritten as
 \begin{equation} \label{2.9}
 \Box_{^{(3)}g} \phi=e^{-2\gamma} m^2 \phi
 \end{equation}
 When the metric takes the form \eqref{1.3}, we give the computation of the Einstein tensor $^{(3)}G_{\alpha\beta}$ ,
\begin{eqnarray*}
&&^{(3)}G_{00}=\frac{1}{r}e^{2\alpha-2\beta} \beta_r, \\
&&^{(3)}G_{01}=\frac{1}{r} \beta_t, \\
&&^{(3)}G_{11}=\frac{1}{r} \alpha_r, \\
&&^{(3)}G_{22}=r^2 \left( e^{-2\beta} \alpha_{rr}-e^{-2\alpha} \beta_{tt}+e^{-2\beta}\alpha_r (\alpha_r-\beta_r)+e^{-2\alpha} \beta_t (\alpha_t-\beta_t) \right), \\
&&^{(3)}G_{02}=0, \\
&&^{(3)}G_{12}=0.
\end{eqnarray*}
 and the stress-energy tensor $^{(3)}T_{\alpha\beta}$,
 \begin{eqnarray*}
&&^{(3)}T_{00}={\gamma_t}^2+e^{2\alpha-2\beta}{\gamma_r}^2+\frac{1}{2}{\phi_t}^2+\frac{1}{2} e^{2\alpha-2\beta} {\phi_r}^2+\frac{m^2}{2}e^{2\alpha-2\gamma}\phi^2 \\
&&=e^{2\alpha} \mathbf{e},\\
&&^{(3)}T_{01}=2\gamma_t \gamma_r+\phi_t\phi_r=e^{(\alpha+\beta)} \mathbf{m}, \\
&&^{(3)}T_{11}=e^{2\beta-2\alpha}{\gamma_t}^2+{\gamma_r}^2+\frac{1}{2}e^{2\beta-2\alpha}{\phi_t}^2+\frac{1}{2} {\phi_r}^2-\frac{m^2}{2}e^{2\beta-2\gamma}\phi^2,\\
&&^{(3)}T_{22}=r^2 e^{-2\alpha}{\gamma_t}^2-r^2 e^{-2\beta}{\gamma_r}^2+\frac{r^2}{2}e^{-2\alpha}{\phi_t}^2-\frac{r^2}{2}e^{-2\beta} {\phi_r}^2-\frac{m^2}{2}r^2 e^{-2\gamma}\phi^2,\\
&&^{(3)}T_{02}=0, \\
&&^{(3)}T_{12}=0.
\end{eqnarray*}

 Thus, using the translational Killing vector field, we have reduced the 3+1 dimensional Einstein-Klein-Gordon system \eqref{1.1} to a 2+1 dimensional Einstein-wave-Klein-Gordon system \eqref{2.7}\eqref{2.8}\eqref{2.9}.

 Now we write the 2+1 dimensional radially symmetric Einstein-wave-Klein-Gordon system in local coordinates,
 \begin{equation} \label{2.10}
{\beta}_r=re^{2\beta-2\alpha}{\gamma_t}^2+r{\gamma_r}^2+\frac{r}{2}e^{2\beta-2\alpha}{\phi_t}^2+\frac{r}{2}{\phi_r}^2+\frac{m^2}{2}re^{2\beta-2\gamma} \phi^2
\end{equation}
\begin{equation} \label{2.11}
\beta_t=2r\gamma_t \gamma_r+r \phi_t \phi_r
\end{equation}
\begin{equation} \label{2.12}
\alpha_r=re^{2\beta-2\alpha}{\gamma_t}^2+r{\gamma_r}^2+\frac{r}{2}e^{2\beta-2\alpha}{\phi_t}^2+\frac{r}{2}{\phi_r}^2-\frac{m^2}{2}re^{2\beta-2\gamma} \phi^2
\end{equation}
\begin{eqnarray} \label{2.13}
 &&e^{-2\beta} \alpha_{rr}-e^{-2\alpha} \beta_{tt}+e^{-2\beta} \alpha_r(\alpha_r-\beta_r)+e^{-2\alpha} \beta_t (\alpha_t-\beta_t) \\ \nonumber
 &&=-\frac{m^2}{2}e^{-2\gamma} \phi^2+ e^{-2\alpha}({\gamma_t}^2+\frac{1}{2}{\phi_t}^2)-e^{-2\beta}({\gamma_r}^2+\frac{1}{2}{\phi_r}^2)
 \end{eqnarray}
\begin{eqnarray} \label{2.14}
\Box_{^{(3)}g}\gamma &=& -e^{-2\alpha}(\gamma_{tt}+(\beta_t-\alpha_t) \gamma_t)+e^{-2\beta} (\gamma_{rr}+\frac{\gamma_r}{r}+(\alpha_r-\beta_r) \gamma_r) \\ \nonumber
&=& -\frac{m^2}{2}e^{-2\gamma}\phi^2
\end{eqnarray}
\begin{eqnarray} \label{2.15}
\Box_{^{(3)}g} \phi &=& -e^{-2\alpha}(\phi_{tt}+(\beta_t-\alpha_t) \phi_t)+e^{-2\beta} (\phi_{rr}+\frac{\phi_r}{r}+(\alpha_r-\beta_r) \phi_r) \\ \nonumber
&=& m^2e^{-2\gamma} \phi.
\end{eqnarray}
Therefore, as we mentioned in section 1, we can set $\beta(0,0)=0$. Then, by \eqref{2.11}, we have $\beta(t,0)=0$ on $\Gamma$.

In addition, we mention here that, if we consider the case that the scalar field is massless, i.e. $V=0$, then we can remove the condition that the Killing vector field $\partial_{x^3}$ is hypersurface orthogonal, and the metric will take the general form
\begin{equation*}
^{(4)}{g}=e^{-2\gamma}{^{(3)}}g+e^{2\gamma}{(dx^3+A_{\alpha}dx^{\alpha})}^2.
\end{equation*}
Thus, the reduction in \cite{gudapati} yields a wave map equations coupled with a wave equation.

Noting that if $V=0$, then $\alpha_r=\beta_r$ by the 2+1 dimensional Einstein equations, then $e^{\alpha-\beta}$ is independent of $r$. Let
 \begin{displaymath}
 T(t)=\int_0^t e^{\alpha-\beta} (\tau) d \tau.
 \end{displaymath}
Therefore, we get a coordinate $(T,r)$ in which $(\mathcal{Q},\check{g})$ is conformally flat. For simplicity, we still denote $T$ by $t$. Thus, rewritting the Einstein-wave map-scalar field system in the new coordinates, we can obtain a spherically symmetric wave map to the hyperbolic space $(\mathbb{H}^2,h)$ coupled with a linear wave equation on the Minkowski spacetimes $(\mathbb{R}^{1+2},m)$.

Noting that regularity for the linear wave equation $\Box_m \phi=0$ that the scalar fields satisfies is trivial , we only need to consider the spherically symmetric wave map on the Minkowski spacetimes $(\mathbb{R}^{1+2},m)$, which can be proved applying \cite{christodoulou2} in the system.

\section{Energy estimates}
 As a preliminary part of later work, we will give the energy estimates of the Einstein-wave-Klein-Gordon system in this section. The energy estimates are performed in a well-known way as in \cite{alinhac}. The notations here will follow what were given in \cite{Andersson}. For simplicity, we denote $(^{(3)}M,^{(3)}g)$ by $(M,g)$ from now on.

Let us define the energy on the Cauchy surface $\Sigma_t$,
\begin{eqnarray*}
E(t)&:=&\int_{\Sigma_t} \mathbf{e} {\bar{\mu}}_q \\
&=& 2\pi \int_0^{\infty} \mathbf{e}(t,r) r e^{\beta(t,r)} dr,
\end{eqnarray*}
the energy in a coordinate ball $B_r$,
\begin{eqnarray*}
E(t,r)& :=& \int_{B_r} \mathbf{e} {\bar{\mu}}_q \\
&=&2\pi \int_0^r \mathbf{e}(t,r') r' e^{\beta(t,r')} d r',
\end{eqnarray*}
the energy inside the causal past $J^{-}(O)$ of $O$,
\begin{displaymath}
E^O(t):=\int_{{\Sigma_t} \cap {J^-(O)}} \mathbf{e} {\bar{\mu}}_q
\end{displaymath}
with $O$ appears at the axis.
\subsection{Energy conservation}
We start by proving the energy is conserved.
\begin{proposition} \label{prop3.1}
The energy $E(t)$ is conserved,
\begin{displaymath}
\frac{d}{dt} E(t)=0.
\end{displaymath}
\end{proposition}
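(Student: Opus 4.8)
The plan is to read off energy conservation directly from the Hamiltonian constraint rather than from a multiplier or vector-field computation. The starting point is the observation that the first constraint equation \eqref{2.10} is merely a repackaging of the energy density: factoring $re^{2\beta}$ out of its right-hand side and comparing with the expression for $\mathbf{e}$ coming from $^{(3)}T_{00}=e^{2\alpha}\mathbf{e}$, one finds
\begin{equation*}
\beta_r = re^{2\beta}\,\mathbf{e}, \qquad\text{equivalently}\qquad r e^{\beta}\,\mathbf{e} = e^{-\beta}\beta_r = -\partial_r\!\left(e^{-\beta}\right),
\end{equation*}
so the integrand defining $E(t)$ is an exact $r$-derivative.

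Consequently the energy is a pure boundary quantity. Integrating,
\begin{equation*}
E(t) = 2\pi\int_0^\infty -\partial_r\!\left(e^{-\beta(t,r)}\right)dr = 2\pi\Big(e^{-\beta(t,0)} - \lim_{r\to\infty}e^{-\beta(t,r)}\Big) = 2\pi\Big(1 - e^{-\beta_\infty(t)}\Big),
\end{equation*}
where I used $\beta(t,0)=0$ (established from $\beta(0,0)=0$ together with \eqref{2.11}) and wrote $\beta_\infty(t)=\lim_{r\to\infty}\beta(t,r)$, finite because $E(t)$ is finite. In passing this also yields the a priori bound $0\le E(t)<2\pi$, since \eqref{2.10} forces $\beta_r\ge 0$ and hence $\beta\ge 0$.

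It therefore suffices to show $\tfrac{d}{dt}\beta_\infty(t)=0$, which I would do without evaluating the limit explicitly: interchanging $\tfrac{d}{dt}$ with the integral and then $\partial_t$ with $\partial_r$ (legitimate by smoothness of the solution and finiteness of the energy),
\begin{equation*}
\frac{d}{dt}E(t) = 2\pi\int_0^\infty \partial_t\big(re^{\beta}\mathbf{e}\big)\,dr = -2\pi\int_0^\infty \partial_r\partial_t\big(e^{-\beta}\big)\,dr = 2\pi\Big[\beta_t\,e^{-\beta}\Big]_{r=0}^{r=\infty}.
\end{equation*}
At the axis the bracket vanishes because $\beta(t,0)\equiv 0$ forces $\beta_t(t,0)=0$. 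At infinity I would invoke the momentum constraint \eqref{2.11}, $\beta_t = r\left(2\gamma_t\gamma_r+\phi_t\phi_r\right)$, so the boundary term equals $\lim_{r\to\infty} r\left(2\gamma_t\gamma_r+\phi_t\phi_r\right)e^{-\beta}$; by the asymptotic-flatness and decay hypotheses \eqref{cd2} (propagated in the weighted spaces $H^s_\delta$, $\delta=-\tfrac12$), the derivatives of $\gamma$ and $\phi$ decay fast enough that this product tends to $0$.

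I expect the only genuine difficulty to be this boundary term at spatial infinity: one must know that $r\,\gamma_t\gamma_r$ and $r\,\phi_t\phi_r$ vanish in the limit, which is precisely where the decay rates in \eqref{cd2} and the weighted-Sobolev framework enter; everything else is algebraic. A secondary point worth recording is that \eqref{2.10} and \eqref{2.11} are genuine constraints propagated by the evolution, so they hold on every slice $\Sigma_t$ of the maximal development, not merely on the initial one.
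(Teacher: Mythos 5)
Your proof is correct and is essentially the same argument as the paper's: both rest on rewriting the constraints \eqref{2.10}--\eqref{2.11} as $-\partial_r\left(e^{-\beta}\right)=re^{\beta}\mathbf{e}$ and $-\partial_t\left(e^{-\beta}\right)=re^{\alpha}\mathbf{m}$ and commuting the mixed partials of $e^{-\beta}$ to get the local conservation law \eqref{3.3}; the paper packages this as the divergence-free current $P_T$ plus Stokes' theorem on the slab between two Cauchy surfaces, while you differentiate under the integral and apply the fundamental theorem of calculus in $r$, which is the same computation in integrated form. Your explicit treatment of the boundary term at $r=\infty$ via the momentum constraint and the decay \eqref{cd2} just makes precise what the paper dismisses by asymptotic flatness, and your identity $E(t)=2\pi\left(1-e^{-\beta_{\infty}(t)}\right)$ is the observation the paper itself uses later, in the proof of Proposition \ref{prop3.2}.
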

\begin{proof}
Consider two Cauchy surfaces $\Sigma_s$ and $\Sigma_{\tau}$ at $t=s$ and $t=\tau$ respectively. First, let us construct a divergence free vector field $P_T$. Consider the Einstein's equations \eqref{2.10} and \eqref{2.11}. They can be rewritten as follows
\begin{equation} \label{3.1}
-\partial_r (e^{-\beta})=r e^{\beta} \mathbf{e}
\end{equation}
\begin{equation} \label{3.2}
-\partial_t (e^{-\beta})=r e^{\alpha} \mathbf{m}
\end{equation}
From the smoothness of $\beta$, we have
\begin{displaymath}
-\partial^2_{rt} (e^{-\beta})=-\partial^2_{tr} (e^{-\beta}).
\end{displaymath}
Together with \eqref{3.1}\eqref{3.2}, we infer
\begin{equation} \label{3.3}
-\partial_t (r e^{\beta} \mathbf{e})+\partial_r(r e^{\alpha} \mathbf{m})=0.
\end{equation}
Now we define a vector field
\begin{displaymath}
P_T:=-e^{-\alpha} \mathbf{e} \partial_t+e^{-\beta} \mathbf{m} \partial_r.
\end{displaymath}
The divergence of $P_T$ is given by
\begin{eqnarray} \label{3.4}
\nabla_{\nu} P_T^{\nu}&=&\frac{1}{\sqrt{|g|}} \partial_{\nu} \left( \sqrt{|g|} P_T^{\nu} \right) \\ \nonumber
&=& \frac{1}{r e^{\beta+\alpha}} \left(-\partial_t (r e^{\beta} \mathbf{e})+\partial_r(re^{\alpha} \mathbf{m})\right).
\end{eqnarray}
By \eqref{3.3}, we know that $P_T$ is divergence free. Next, let us apply Stokes' theorem in the space-time region whose boundary is $\Sigma_s \cup \Sigma_{\tau}$. Due to the asymptotic flatness, the boundary terms at $r=\infty$ do not contribute. Thus, we have
\begin{equation} \label{3.5}
\int_{\Sigma_s} e^{\alpha} P_T^t {\bar{\mu}}_q-\int_{\Sigma_{\tau}} e^{\alpha} P_T^t {\bar{\mu}}_q=0.
\end{equation}
Therefore, it follows that
\begin{equation} \label{3.6}
E(\tau)=E(s)
\end{equation}
which proves that the energy is conserved.
\end{proof}
With the conservation of the energy, we can prove that the metric functions $\beta(t,r)$ and $\alpha(t,r)$ are uniformly bounded during the evolution of the Einstein-wave-Klein-Gordon system.
\begin{proposition} \label{prop3.2}
There exists constants $c_{\beta}^{-},c_{\beta}^{+},c_{\alpha}^{-},c_{\alpha}^{+}$ depending only on the initial data and the universal constants such that the following uniform bounds on the metric functions $\beta(t,r)$ and $\alpha(t,r)$ hold
\begin{displaymath}
c_{\beta}^{-} \leq \beta(t,r) \leq c_{\beta}^{+},
\end{displaymath}
\begin{displaymath}
c_{\alpha}^{-} \leq \alpha(t,r) \leq c_{\alpha}^{+}.
\end{displaymath}
\end{proposition}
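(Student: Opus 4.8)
The plan is to read off all four bounds from the two Einstein constraint equations \eqref{3.1}--\eqref{3.2}, the normalizations $\alpha(t,0)=\beta(t,0)=0$ established above, and the conservation law of Proposition \ref{prop3.1}. The first observation is that the energy density $\mathbf{e}$ is a sum of squares, hence $\mathbf{e}\ge 0$, and that \eqref{3.1} reads $-\partial_r(e^{-\beta})=re^{\beta}\mathbf{e}\ge 0$. Thus $e^{-\beta(t,\cdot)}$ is non-increasing in $r$; since $\beta(t,0)=0$ gives $e^{-\beta(t,0)}=1$, we get $e^{-\beta(t,r)}\le 1$, i.e. $\beta(t,r)\ge 0$, which is the lower bound $c_{\beta}^{-}=0$.

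For the upper bound on $\beta$ I would integrate \eqref{3.1} from $0$ to $r$ and recognize the right-hand side as the energy in a ball, obtaining
\begin{equation*}
e^{-\beta(t,r)}=1-\frac{E(t,r)}{2\pi}.
\end{equation*}
Since $\mathbf{e}\ge 0$ makes $E(t,r)$ non-decreasing in $r$ and bounded by the total energy, and since Proposition \ref{prop3.1} gives $E(t)=E(0)$, we have $E(t,r)\le E(0)$ uniformly in $t$. The asymptotic flatness of the data forces $\beta_{\infty}$ to be finite, which is exactly the statement $e^{-\beta_{\infty}}=1-E(0)/2\pi>0$; hence $\beta(t,r)\le -\ln\bigl(1-E(0)/2\pi\bigr)=\beta_{\infty}=:c_{\beta}^{+}$.

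For $\alpha$, I would subtract \eqref{2.12} from \eqref{2.10}; all the kinetic terms cancel and leave $\beta_r-\alpha_r=m^2 re^{2\beta-2\gamma}\phi^2\ge 0$. Integrating from $0$ with $\alpha(t,0)=\beta(t,0)=0$ gives $\alpha(t,r)=\beta(t,r)-\int_0^r m^2 r'e^{2\beta-2\gamma}\phi^2\,dr'$. The upper bound is then immediate, $\alpha(t,r)\le\beta(t,r)\le c_{\beta}^{+}$, so one sets $c_{\alpha}^{+}:=c_{\beta}^{+}$. For the lower bound I would control the remaining integral by the energy: using $\beta\le\beta_{\infty}$, the bound $e^{\beta}\ge 1$ (from $\beta\ge 0$), and $\tfrac{m^2}{2}e^{-2\gamma}\phi^2\le\mathbf{e}$, one estimates $\int_0^r m^2 r'e^{2\beta-2\gamma}\phi^2\,dr'\le 2e^{2\beta_{\infty}}\int_0^r r'\mathbf{e}\,dr'\le 2e^{2\beta_{\infty}}\int_0^r r'e^{\beta}\mathbf{e}\,dr'=e^{2\beta_{\infty}}E(t,r)/\pi\le e^{2\beta_{\infty}}E(0)/\pi$. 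This yields $\alpha(t,r)\ge -e^{2\beta_{\infty}}E(0)/\pi=:c_{\alpha}^{-}$, again uniform in $t$.

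The only genuine subtlety, and the place where the argument really uses the structure of the problem rather than bookkeeping, is the uniformity in $t$: every bound ultimately rests on replacing $E(t,r)$ by $E(0)$, which is legitimate precisely because the monotonicity coming from $\mathbf{e}\ge 0$ combines with the conservation law of Proposition \ref{prop3.1}. The upper bound on $\beta$ additionally needs the strict inequality $E(0)<2\pi$, i.e. the finiteness of the conical deficit constant $\beta_{\infty}$, which is built into the asymptotic flatness of the admissible Cauchy data. Beyond these two observations I expect no computational obstacle.
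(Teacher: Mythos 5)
Your proof is correct and takes essentially the same route as the paper: integrating the constraint \eqref{3.1} from the axis to get $e^{-\beta}=1-E(t,r)/2\pi$, combining monotonicity of $E(t,r)$ in $r$ with the conservation law of Proposition \ref{prop3.1} for the two-sided bound on $\beta$, and then controlling $\alpha$ from the radial constraint using $m^2e^{-2\gamma}\phi^2\le 2\mathbf{e}$ and the already-established bounds on $\beta$. Your only (harmless) variation is in the $\alpha$ step, where you subtract \eqref{2.12} from \eqref{2.10} to get $\beta_r-\alpha_r=m^2re^{2\beta-2\gamma}\phi^2\ge 0$ and hence $\alpha\le\beta$ directly, while the paper integrates $\alpha_r=re^{2\beta}(\mathbf{e}-\mathbf{f})$ and bounds the positive and negative parts separately by the energy.
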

\begin{proof}
For simplicity of notation, we use a generic constant $c$ for the estimates on $\beta(t,r)$ and $\alpha(t,r)$. Integrating \eqref{3.1} with respect to $r$ and noting the normalisation that $\beta|_{\Gamma}=0$ as we mentioned before, we obtain
\begin{displaymath}
1-e^{-\beta}=\int_0^r \mathbf{e} r' e^{\beta} d r'=\frac{1}{2\pi} E(t,r),
\end{displaymath}
which implies
\begin{displaymath}
e^{\beta}={\left( 1- \frac{1}{2\pi} E(t,r) \right)}^{-1}.
\end{displaymath}
Now we define
\begin{displaymath}
\beta_{\infty}(t)=\lim_{r \to \infty} \beta(r,t).
\end{displaymath}
Then we have
\begin{displaymath}
e^{\beta_{\infty}(t)}={\left( 1- \frac{1}{2\pi} E(t) \right)}^{-1}.
\end{displaymath}
Since $E(t,r)$ is a nondecreasing function of $r$, then so is $\beta(t,r)$, therefore,
\begin{displaymath}
1=e^{\beta(t,0)} \leq e^{\beta(t,r)} \leq e^{\beta_{\infty}(t)}.
\end{displaymath}
Moreover, since the energy is conserved $E(t)=E(0)$, $\beta_{\infty}(t)$ is also conserved during the evolution of the Einstein-wave-Klein-Gordon system,
$$\beta_{\infty}(t)=\beta_{\infty}(0).$$
Thus,
\begin{displaymath}
0 \leq \beta(t,r) \leq \beta_{\infty}(0).
\end{displaymath}
Now we introduce
\begin{displaymath}
\mathbf{f}:=m^2 e^{-2\gamma} \phi^2
\end{displaymath}
Thus the Einstein's equation \eqref{2.12} for $\alpha_r$ takes the form
\begin{displaymath}
\alpha_r=r e^{2\beta} (\mathbf{e}-\mathbf{f}).
\end{displaymath}
Similarly, we integrate the above equation with respect to $r$ and noting the normalisation that $\alpha|_{\Gamma}=0$, we obtain
\begin{eqnarray*}
\alpha(t,r) & \leq & c \int_0^r (\mathbf{e}-\mathbf{f}) r' e^{\beta} d r' \\
& \leq & c \int_0^r \mathbf{e} r' e^{\beta} d r' \\
& \leq & c
\end{eqnarray*}
and
\begin{eqnarray*}
\alpha(t,r) &\geq& -c\int_0^r \frac{\mathbf{f}}{2} r' e^{\beta} d r' \\
& \geq & -c \int_0^r \mathbf{e} r' e^{\beta} d r' \\
& \geq& -c.
\end{eqnarray*}
This finishes the proof of the proposition.
\end{proof}
\subsection{The Vector field Method}
Let $X$ be a vector field on $M$. Set the corresponding momentum $P_X$ as follows
\begin{equation} \label{3.7}
P_X^{\mu}=T^{\mu}_{\nu} X^{\nu},
\end{equation}
then, we have
\begin{equation} \label{3.8}
\nabla_{\nu} P_X^{\nu}=X^{\mu} \nabla_{\nu} T^{\nu}_{\mu}+T^{\nu}_{\mu} \nabla_{\nu} X^{\mu}.
\end{equation}
Since the stress-energy tensor $T_{\mu\nu}$ satisfies
\begin{displaymath}
\nabla^{\mu} T_{\mu\nu}=0,
\end{displaymath}
the first term in the right hand side vanishes, hence
\begin{eqnarray*}
\nabla_{\nu} P_X^{\nu}&=&T^{\mu\nu} \nabla_{\mu} X_{\nu} \\
&=& \frac{1}{2}{ ^{(X)}{\pi}_{\mu\nu} T^{\mu\nu}},
\end{eqnarray*}
where the deformation tensor $ ^{(X)} {\pi}_{\mu\nu}$ is defined by
\begin{eqnarray*}
 ^{(X)} {\pi}_{\mu\nu} &:=& \nabla_{\mu} X_{\nu}+\nabla_{\nu} X_{\mu} \\
 &=& g_{\sigma\nu} \partial_{\mu} X^{\sigma}+g_{\sigma\mu} \partial_{\nu} X^{\sigma}+X^{\sigma} \partial_{\sigma} g_{\mu\nu}.
\end{eqnarray*}

For instance, consider $T=e^{-\alpha}\partial_t$, the corresponding momentum $P_T$ is
\begin{equation} \label{3.9}
P_T=-e^{-\alpha} \mathbf{e} \partial_t+e^{-\beta} \mathbf{m} \partial_r.
\end{equation}
Then,  we have the non-zero components of the deformation tensor are
\begin{displaymath}
 ^{(T)} {\pi}_{01} = e^{\alpha} \alpha_r= ^{(T)} {\pi}_{10},
\end{displaymath}
\begin{displaymath}
 ^{(T)} {\pi}_{11} =2 e^{2\beta-\alpha} \beta_t.
\end{displaymath}
Thus, using the Einstein equations \eqref{2.11}\eqref{2.12}, we have that the divergence of $P_T$ is,
\begin{eqnarray*}
\nabla_{\nu} P_T^{\nu}&=&e^{-\alpha} \beta_t (\mathbf{e}-\mathbf{f})-e^{-\beta}\alpha_r \mathbf{m} \\
&=& 0.
\end{eqnarray*}
This is compatible with \eqref{3.3}\eqref{3.4}.

Now let $J^-(O)$ be the causal past of the point $O$ on the axis and $I^-(O)$ the chronological past of $O$. Compared to the flat case, we give the following definitions
\begin{displaymath}
\Sigma_t^O:=\Sigma_t \cap J^-(O),
\end{displaymath}
\begin{displaymath}
K(t):=\cup_{0 \leq t \leq t' < t_O} \Sigma_{t'} \cap J^-(O),
\end{displaymath}
\begin{displaymath}
C(t):=\cup_{0 \leq t \leq t' < t_O} \Sigma_{t'} \cap \left(J^-(O) \setminus I^-(O) \right),
\end{displaymath}
\begin{displaymath}
K(t,s):=\cup_{0 \leq t \leq t' < s} \Sigma_{t'} \cap J^-(O),
\end{displaymath}
\begin{displaymath}
C(t,s):=\cup_{0 \leq t \leq t' < s} \Sigma_{t'} \cap \left(J^-(O) \setminus I^-(O) \right)
\end{displaymath}
for $0 \leq t<s<t_O$.

Then, the volume 3-form of $(M,g)$ is given by
\begin{displaymath}
{\bar{\mu}}_g=r e^{\beta+\alpha} dt \wedge dr \wedge d \theta
\end{displaymath}
and the area 2-form of $(\Sigma,q)$ is given by
\begin{displaymath}
{\bar{\mu}}_q=r e^{\beta} dr \wedge d \theta.
\end{displaymath}
Let us define 1-forms $\tilde{l},\tilde{n}$ and $\tilde{m}$ as follows
\begin{displaymath}
\tilde{l}:=-e^{\alpha}dt+e^{\beta}dr,
\end{displaymath}
\begin{displaymath}
\tilde{n}:=-e^{\alpha}dt-e^{\beta}dr,
\end{displaymath}
\begin{displaymath}
\tilde{m}:=r d \theta,
\end{displaymath}
therefore,
\begin{displaymath}
{\bar{\mu}}_g=\frac{1}{2} \left( \tilde{l} \wedge \tilde{n} \wedge \tilde{m} \right).
\end{displaymath}
Then we introduce the 2-forms ${\bar{\mu}}_{\tilde{l}}$ and ${\bar{\mu}}_{\tilde{n}}$ such that
\begin{displaymath}
{\bar{\mu}}_{\tilde{l}}:=-\frac{1}{2} \tilde{n} \wedge \tilde{m},
\end{displaymath}
\begin{displaymath}
{\bar{\mu}}_{\tilde{n}}:=\frac{1}{2} \tilde{l} \wedge \tilde{m},
\end{displaymath}
so we have
\begin{displaymath}
{\bar{\mu}}_g=-\tilde{l} \wedge {\bar{\mu}}_{\tilde{l}},
\end{displaymath}
\begin{displaymath}
{\bar{\mu}}_g=-\tilde{n} \wedge {\bar{\mu}}_{\tilde{n}}.
\end{displaymath}
Now, let us apply the Stokes' theorem for the ${\bar{\mu}}_g$-divergence of $P_X$ in the region $K(\tau,s)$. We have
\begin{equation} \label{3.10}
\int_{K(\tau,s)} \nabla_{\nu} P_X^{\nu} {\bar{\mu}}_g= \int_{\Sigma_s^O} e^{\alpha} P_X^t {\bar{\mu}}_q-\int_{\Sigma_{\tau}^O} e^{\alpha}P_X^t {\bar{\mu}}_q+Flux(P_X)(\tau,s)
\end{equation}
where
\begin{displaymath}
Flux(P_X)(\tau,s)=-\int_{C(\tau,s)} \tilde{n}(P_X) {\bar{\mu}}_{\tilde{n}}.
\end{displaymath}

\subsection{Monotonicity of Energy}
\begin{proposition} \label{prop3.3}
There holds $E^O(\tau) \geq E^O(s)$ for $0 \leq \tau<s<t_O$.
\end{proposition}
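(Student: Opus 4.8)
The plan is to feed the light-cone form of the energy identity into the Stokes formula \eqref{3.10}, applied to the timelike vector field $X=T=e^{-\alpha}\partial_t$. Since $P_T$ was already shown to be divergence free (this is precisely the computation $\nabla_\nu P_T^\nu=e^{-\alpha}\beta_t(\mathbf{e}-\mathbf{f})-e^{-\beta}\alpha_r\mathbf{m}=0$ using \eqref{2.11} and \eqref{2.12}), the bulk integral $\int_{K(\tau,s)}\nabla_\nu P_T^\nu\,\bar{\mu}_g$ vanishes, and \eqref{3.10} collapses to a relation among the two spacelike caps $\Sigma_s^O$, $\Sigma_\tau^O$ and the null flux through $C(\tau,s)$.

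First I would identify the cap integrals with the energies. From \eqref{3.9} the $t$-component of $P_T$ is $P_T^t=-e^{-\alpha}\mathbf{e}$, so $e^\alpha P_T^t=-\mathbf{e}$, and hence $\int_{\Sigma_t^O}e^\alpha P_T^t\,\bar{\mu}_q=-E^O(t)$ by the definition of $E^O$. Substituting into \eqref{3.10} with vanishing left-hand side gives
\[
0=-E^O(s)+E^O(\tau)+Flux(P_T)(\tau,s),
\]
so that
\[
E^O(\tau)-E^O(s)=-Flux(P_T)(\tau,s)=\int_{C(\tau,s)}\tilde{n}(P_T)\,\bar{\mu}_{\tilde{n}}.
\]
Thus the proposition reduces to showing that this null-cone integral is nonnegative.

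The crux is a pointwise positivity computation. Contracting the null form $\tilde{n}=-e^\alpha dt-e^\beta dr$ with $P_T=-e^{-\alpha}\mathbf{e}\,\partial_t+e^{-\beta}\mathbf{m}\,\partial_r$ gives $\tilde{n}(P_T)=\mathbf{e}-\mathbf{m}$. Writing out $\mathbf{e}$ and $\mathbf{m}$ from the stress-energy tensor and completing the square yields
\[
\mathbf{e}-\mathbf{m}=(e^{-\alpha}\gamma_t-e^{-\beta}\gamma_r)^2+\tfrac12(e^{-\alpha}\phi_t-e^{-\beta}\phi_r)^2+\tfrac{m^2}{2}e^{-2\gamma}\phi^2\ge 0,
\]
which is the dominant-energy-type inequality for this system. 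Provided $\bar{\mu}_{\tilde{n}}$ is oriented so as to be a nonnegative measure on the null boundary $C(\tau,s)$, the flux integral is nonnegative and the claimed monotonicity $E^O(\tau)\ge E^O(s)$ follows.

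The hard part will be the orientation and boundary bookkeeping rather than any analytic estimate: I must check that $\bar{\mu}_{\tilde{n}}=\tfrac12\,\tilde{l}\wedge\tilde{m}$ restricts to a positive measure on $C(\tau,s)$ with the orientation induced as part of the boundary of $K(\tau,s)$, and that the inner edge of the region at the axis $r=0$ contributes nothing. The latter is legitimate because $r=0$ is a coordinate singularity at a smooth point of $(M,g)$ and we work strictly before the first singularity, where $\alpha,\beta$ and the fields are regular and $\beta|_\Gamma=0$, so the $\theta$-circle shrinks without producing a genuine boundary term. Once these sign conventions are fixed, the positivity of $\mathbf{e}-\mathbf{m}$ closes the argument.
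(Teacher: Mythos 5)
Your proposal is correct and follows essentially the same route as the paper: apply Stokes' theorem \eqref{3.10} to the divergence-free momentum $P_T$, identify the cap integrals with $-E^O(s)$ and $-E^O(\tau)$, and conclude from the sign of the null flux through $C(\tau,s)$ via $\tilde{n}(P_T)=\mathbf{e}-\mathbf{m}\geq 0$. Your only addition is to verify the pointwise inequality explicitly by the completed square $(e^{-\alpha}\gamma_t-e^{-\beta}\gamma_r)^2+\tfrac12(e^{-\alpha}\phi_t-e^{-\beta}\phi_r)^2+\tfrac{m^2}{2}e^{-2\gamma}\phi^2$, which the paper simply asserts as $\mathbf{e}\geq|\mathbf{m}|$.
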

\begin{proof}
Applying Stokes' theorem \eqref{3.10} to the vector field $P_T$, we have
\begin{equation} \label{3.11}
0=-\int_{\Sigma_s^O} \mathbf{e} {\bar{\mu}}_q+\int_{\Sigma_{\tau}^O} \mathbf{e} {\bar{\mu}}_q+Flux(P_T)(\tau,s)
\end{equation}
where
\begin{eqnarray*}
Flux(P_T)(\tau,s) &=& -\int_{C(\tau,s)} \tilde{n}(P_T) {\bar{\mu}}_{\tilde{n}} \\
&=&-\int_{C(\tau,s)} (\mathbf{e}-\mathbf{m}){\bar{\mu}}_{\tilde{n}}.
\end{eqnarray*}
Noting that we have $\mathbf{e} \geq |\mathbf{m}|$, we obtain
\begin{displaymath}
Flux(P_T)(\tau,s) \leq 0
\end{displaymath}
which implies
\begin{displaymath}
E^O(\tau)-E^O(s) \geq 0,\quad \forall 0\leq \tau \leq s <t_O.
\end{displaymath}
This concludes the proof of the proposition.
\end{proof}

\section{Null coordinates}
In this section, we  introduce a null coordinate system, in which the wave equations may be written in a classical form in the flat case. This coordinate system will be of great importance in our work on global well-posedness. In the following part, we assume that all objects are smooth, unless otherwise stated.
\subsection{Existence of null coordinates}
 Let $(\mathcal{Q},\check{g})$ be the orbit space, where
 \begin{displaymath}
 \mathcal{Q}=M/{{\mathbb{S}}^1}
 \end{displaymath}
 and
 \begin{equation*}
 \check{g}=-e^{2\alpha} dt^2+e^{2\beta} dr^2.
 \end{equation*}
 As discussed in Section 1, the orbit space $(\mathcal{Q},\check{g})$ is a 2-dimensional globally hyperbolic Lorentzian space and thus in particular locally conformally flat. Hence, as noted in \cite{Rendall}, we may introduce a null coordinate system with respect to which $\check{g}$ takes the form
 \begin{displaymath}
 \check{g}=-e^{2\lambda}(u,v)dudv
 \end{displaymath}
 which means that the 3-dimensional manifold $(M,g)$ admits a coordinate system $(u,v,\theta)$ such that $g$ takes the form
  \begin{displaymath}
g=-e^{2\lambda}(u,v)dudv+r^2(u,v)d\theta^2
 \end{displaymath}
 where now $d\theta^2$ is the line element on the $\mathbb{S}^1$ symmetry orbit. Then we can define
 \begin{displaymath}
 T=\frac{u+v}{2},\, R=\frac{v-u}{2}.
 \end{displaymath}

 A direct calculation gives the following equations that the null coordinates satisfy,
 \begin{equation} \label{4.1.1}
 \begin{cases}
 u_t+e^{\alpha-\beta} u_r=0, \\
 v_t-e^{\alpha-\beta} v_r=0.
 \end{cases}
 \end{equation}
 Obviously, such coordinate system exists locally. Now we impose the following initial boundary conditions,
 \begin{equation} \label{4.1.2}
 \begin{cases}
t=0: u=-r,\, v=r\\
r=0: u=v
 \end{cases}
 \end{equation}

 We will show that the solution of the above initial-boundary problem \eqref{4.1.1}\eqref{4.1.2} give a global null coordinate system in the next part, where a prior estimate for the Jacobian is given.

\subsection{$L^{\infty}$ Estimate for the Jacobian}
Based on the energy estimates in section 3, we aim to derive uniform bounds for the Jacobian transformation between $(t,r,\theta)$ and $(u,v,\theta)$ coordinates in this section, which can lead to a global null coordinate system. In view of the definitions of the 1-forms $\tilde{l}$ and $\tilde{n}$ in section 3, their corresponding vector fields are null, given by
\begin{displaymath}
\tilde{l}=e^{-\alpha} \partial_t+e^{-\beta} \partial_r,
\end{displaymath}
\begin{displaymath}
\tilde{n}=e^{-\alpha} \partial_t-e^{-\beta} \partial_r.
\end{displaymath}
\begin{lemma} \label{lem3.4}
There exists two scalar functions $\mathcal{F}$ and $\mathcal{G}$ such that
\begin{equation} \label{3.12}
\partial_v=\frac{1}{2} e^{\mathcal{F}} \tilde{l},\, \partial_u=\frac{1}{2} e^{\mathcal{G}} \tilde{n}.
\end{equation}
Moreover, $\mathcal{F}$ and $\mathcal{G}$ satisfy the following equations
\begin{equation} \label{3.13}
2\partial_v (\mathcal{G})=e^{\mathcal{F}} r e^{\beta}(\mathbf{e}+\mathbf{m}-\mathbf{f}),
\end{equation}
\begin{equation} \label{3.14}
2\partial_u (\mathcal{F})=-e^{\mathcal{G}} r e^{\beta}(\mathbf{e}-\mathbf{m}-\mathbf{f}).
\end{equation}
\end{lemma}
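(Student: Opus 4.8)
The plan is to derive \eqref{3.12} from the chain rule together with the defining relations \eqref{4.1.1}, and then to obtain \eqref{3.13}--\eqref{3.14} from the fact that $\partial_u$ and $\partial_v$ are coordinate fields and hence commute.

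First I would invert the Jacobian of the map $(t,r)\mapsto(u,v)$. Writing $J=u_tv_r-u_rv_t$, one has $\partial_v=\frac{1}{J}(-u_r\partial_t+u_t\partial_r)$ and $\partial_u=\frac{1}{J}(v_r\partial_t-v_t\partial_r)$. Substituting $u_t=-e^{\alpha-\beta}u_r$ and $v_t=e^{\alpha-\beta}v_r$ from \eqref{4.1.1} collapses each of these onto a single null direction, since $e^{-\alpha}\partial_t\pm e^{-\beta}\partial_r=e^{-\alpha}(\partial_t\pm e^{\alpha-\beta}\partial_r)$. This gives $\partial_v=\frac{-u_re^{\alpha}}{J}\tilde{l}$ and $\partial_u=\frac{v_re^{\alpha}}{J}\tilde{n}$, which is exactly \eqref{3.12} upon setting $e^{\mathcal{F}}=-2u_re^{\alpha}/J$ and $e^{\mathcal{G}}=2v_re^{\alpha}/J$. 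The logarithms are legitimate because at $t=0$ the data \eqref{4.1.2} give $u_r=-1,\,v_r=1$ and $J=2e^{\alpha-\beta}>0$, so both factors equal $e^{\beta}>0$ (in particular $\mathcal{F}=\mathcal{G}=\beta$ on $\Sigma_0$), and they remain positive as long as the null coordinate map is a local diffeomorphism---precisely the non-degeneracy of the Jacobian under study in this subsection.

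For the transport equations I would use $[\partial_u,\partial_v]=0$. Writing $\tilde{l}=2e^{-\mathcal{F}}\partial_v$ and $\tilde{n}=2e^{-\mathcal{G}}\partial_u$ from \eqref{3.12}, and applying the Leibniz rule $[fX,gY]=fg[X,Y]+f(Xg)Y-g(Yf)X$ with $X=\tilde{n},\,Y=\tilde{l}$, a short computation gives $0=[\partial_u,\partial_v]=\frac14 e^{\mathcal{F}+\mathcal{G}}[\tilde{n},\tilde{l}]+(\partial_u\mathcal{F})\partial_v-(\partial_v\mathcal{G})\partial_u$. Everything then reduces to the commutator of the null frame: a direct calculation from $\tilde{n}=e^{-\alpha}\partial_t-e^{-\beta}\partial_r$, $\tilde{l}=e^{-\alpha}\partial_t+e^{-\beta}\partial_r$ yields $[\tilde{n},\tilde{l}]=2e^{-\alpha-\beta}(\alpha_r\partial_t-\beta_t\partial_r)$, and re-expanding $\partial_t=\frac{e^{\alpha}}{2}(\tilde{l}+\tilde{n})$, $\partial_r=\frac{e^{\beta}}{2}(\tilde{l}-\tilde{n})$ in the null frame gives $[\tilde{n},\tilde{l}]=(\alpha_re^{-\beta}-\beta_te^{-\alpha})\tilde{l}+(\alpha_re^{-\beta}+\beta_te^{-\alpha})\tilde{n}$. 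Feeding this back and equating the independent $\partial_v$ and $\partial_u$ components to zero isolates $2\partial_u\mathcal{F}=-e^{\mathcal{G}}(\alpha_re^{-\beta}-\beta_te^{-\alpha})$ and $2\partial_v\mathcal{G}=e^{\mathcal{F}}(\alpha_re^{-\beta}+\beta_te^{-\alpha})$.

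It then remains to convert the metric-derivative coefficients into the energy densities. From the constraint \eqref{2.12} and the definitions of $\mathbf{e},\mathbf{f}$ one checks $\alpha_r=re^{2\beta}(\mathbf{e}-\mathbf{f})$, hence $\alpha_re^{-\beta}=re^{\beta}(\mathbf{e}-\mathbf{f})$; from \eqref{2.11} and the definition of $\mathbf{m}$ one checks $\beta_t=re^{\alpha+\beta}\mathbf{m}$, hence $\beta_te^{-\alpha}=re^{\beta}\mathbf{m}$. Substituting these two identities produces exactly \eqref{3.13} and \eqref{3.14}. The only genuinely delicate points are the sign and orientation bookkeeping that keeps $e^{\mathcal{F}},e^{\mathcal{G}}$ positive (tied to the Jacobian non-degeneracy) and the careful re-expansion of $[\tilde{n},\tilde{l}]$ back into the null frame; everything else is a routine verification using the Einstein equations already recorded in Section~2.
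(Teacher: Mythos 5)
Your proposal is correct and follows essentially the same route as the paper: you obtain \eqref{3.12} from the nullity and future-directedness of $\partial_u,\partial_v$ (with $\mathcal{F}=\mathcal{G}=\beta$ on $t=0$ from the initial Jacobian), and then derive \eqref{3.13}--\eqref{3.14} from $[\partial_u,\partial_v]=0$ combined with the frame commutator $[\tilde{n},\tilde{l}]$ and the constraint equations \eqref{2.11}, \eqref{2.12}, exactly as the paper does. The only differences are cosmetic --- you invert the Jacobian explicitly and expand the commutator in the null frame $(\tilde{l},\tilde{n})$, whereas the paper expands in $(\partial_t,\partial_r)$ and then adds and subtracts the resulting pair of equations --- and your continuity argument for the positivity of the proportionality factors is, if anything, slightly more explicit than the paper's.
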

\begin{proof}
In view of \eqref{4.1.1} and \eqref{4.1.2}, on the initial surface, there holds
\begin{eqnarray*}
\left(
\begin{matrix}
\partial_t u & \partial_r u  \\
\partial_t v & \partial_r v
\end{matrix}\right)
&=&\left(
\begin{matrix}
e^{\alpha-\beta} &-1 \\
e^{\alpha-\beta} & 1
\end{matrix}\right)
\end{eqnarray*}
which implies
\begin{eqnarray*}
\left(
\begin{matrix}
\partial_u t & \partial_v t  \\
\partial_u r & \partial_v r
\end{matrix}\right)
&=& \frac{1}{2}\left(
\begin{matrix}
e^{\beta-\alpha} &e^{\beta-\alpha} \\
-1 & 1
\end{matrix}\right)
\end{eqnarray*}
on $t=0$.

Thus, we have
\begin{displaymath}
\partial_v r=\frac{1}{2},\, \partial_u r=-\frac{1}{2}, \, \tilde{l}(r)=e^{-\beta},\, \tilde{n}(r)=-e^{-\beta}
\end{displaymath}
on $t=0$.

Note the fact that $\partial_u$ and $\partial_v$ are null vectors, and $\partial_u,\partial_v,\tilde{l}$ and $\tilde{n}$ are all future directed. Thus, we infer that there exists two scalar functions $\mathcal{F}$ and $\mathcal{G}$ such that
\begin{displaymath}
\partial_v=\frac{1}{2} e^{\mathcal{F}} \tilde{l},\, \partial_u=\frac{1}{2} e^{\mathcal{G}} \tilde{n},
\end{displaymath}
with the normalization on the initial Cauchy surface
\begin{displaymath}
\mathcal{F}=\mathcal{G}=\beta.
\end{displaymath}

Now we derive the equations that $\mathcal{F}$ and $\mathcal{G}$ satisfy. We have
\begin{displaymath}
[\tilde{l},\tilde{n}]=2 e^{-(\beta+\alpha)} (-\alpha_r \partial_t+\beta_t \partial_r).
\end{displaymath}
Then,
\begin{eqnarray*}
[\partial_v,\partial_u]&=&\frac{e^{(\mathcal{F}+\mathcal{G})}}{4} \left( [\tilde{l},\tilde{n}]+\tilde{l}(\mathcal{G}) \tilde{n}-\tilde{n}(\mathcal{F}) \tilde{l}\right) \\
&=& \frac{e^{(\mathcal{F}+\mathcal{G})}}{2} e^{-(\beta+\alpha)} (-\alpha_r \partial_t+\beta_t \partial_r)+\frac{e^{\mathcal{G}}}{2} \partial_v(\mathcal{G}) (e^{-\alpha}\partial_t-e^{-\beta}\partial_r) \\
&&-\frac{e^{\mathcal{F}}}{2} \partial_u(\mathcal{F}) (e^{-\alpha}\partial_t+e^{-\beta} \partial_r).
\end{eqnarray*}
Since $[\partial_v,\partial_u]=0$, $\mathcal{F}$ and $\mathcal{G}$ are such that
\begin{displaymath}
e^{-\mathcal{F}} \partial_v(\mathcal{G})-e^{-\mathcal{G}} \partial_u(\mathcal{F})=r e^{\beta} (\mathbf{e}-\mathbf{f}),
\end{displaymath}
\begin{displaymath}
e^{-\mathcal{F}} \partial_v(\mathcal{G})+e^{-\mathcal{G}} \partial_u(\mathcal{F})=r e^{\beta} \mathbf{m},
\end{displaymath}
and hence
\begin{displaymath}
2\partial_v(\mathcal{G})=e^{\mathcal{F}} r e^{\beta} (\mathbf{e}+\mathbf{m}-\mathbf{f}),
\end{displaymath}
\begin{displaymath}
2\partial_u(\mathcal{F})=-e^{\mathcal{G}} r e^{\beta} (\mathbf{e}-\mathbf{m}-\mathbf{f}).
\end{displaymath}
This concludes the proof of the lemma.
\end{proof}

Now, with respect to the null coordinate system, let us revisit Stokes' theorem for ${\bar{\mu}}_g$-divergence for $P_X$ in $K(\tau,s)$. We have
\begin{displaymath}
d v=-e^{-\mathcal{F}} \tilde{n},\, du=-e^{-\mathcal{G}} \tilde{l}.
\end{displaymath}
And the volume 3-form of $(M,g)$ takes the form
\begin{displaymath}
{\bar{\mu}}_g=\frac{1}{2} r e^{2\lambda} du \wedge dv \wedge d \theta.
\end{displaymath}
Next, we introduce the 2-forms ${\bar{\mu}}_v$ and ${\bar{\mu}}_u$ as follows
\begin{displaymath}
{\bar{\mu}}_g=dv \wedge {\bar{\mu}}_v,\, {\bar{\mu}}_g=du \wedge {\bar{\mu}}_u.
\end{displaymath}
From the above two formulas, we infer
\begin{displaymath}
{\bar{\mu}}_v=-\frac{1}{2} r e^{2\lambda} ( du \wedge d \theta),\, {\bar{\mu}}_u=\frac{1}{2}r e^{2\lambda} (dv \wedge d \theta).
\end{displaymath}
Therefore,
\begin{displaymath}
Flux(P_X)(\tau,s)=\int_{C(\tau,s)} dv(P_X) {\bar{\mu}}_v,
\end{displaymath}
for instance,
\begin{eqnarray*}
Flux(P_T)(\tau,s)&=&\int_{C(\tau,s)} dv(P_T) {\bar{\mu}}_v, \\
&=&-\int_{C(\tau,s)} e^{-\mathcal{F}} (\mathbf{e}-\mathbf{m}) {\bar{\mu}}_v.
\end{eqnarray*}
\begin{lemma} \label{lem3.5}
There exists constants $c_{\mathcal{G}}^{-},c_{\mathcal{G}}^{+},c_{\mathcal{F}}^{-}$ and $c_{\mathcal{F}}^{+}$ depending only on the initial data and the universal constants such that the following uniform bounds hold
\begin{displaymath}
c_{\mathcal{G}}^{-} \leq \mathcal{G} \leq c_{\mathcal{G}}^{+}
\end{displaymath}
\begin{displaymath}
c_{\mathcal{F}}^{-} \leq \mathcal{F} \leq c_{\mathcal{F}}^{+}.
\end{displaymath}
\end{lemma}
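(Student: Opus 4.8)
The plan is to integrate the transport equations \eqref{3.13} and \eqref{3.14} along the characteristics and to control their right-hand sides by the conserved energy of Section 3. Two preliminary facts anchor the argument. First, on the initial Cauchy surface the normalization $\mathcal{F}=\mathcal{G}=\beta$ holds, and $\beta$ is uniformly bounded by Proposition \ref{prop3.2}. Second, $\mathcal{F}=\mathcal{G}$ on the axis $\Gamma$: since $\partial_v=\frac{1}{2}e^{\mathcal{F}}\tilde{l}$ and $\partial_u=\frac{1}{2}e^{\mathcal{G}}\tilde{n}$ give $\partial_v r=\frac{1}{2}e^{\mathcal{F}-\beta}$ and $\partial_u r=-\frac{1}{2}e^{\mathcal{G}-\beta}$, and $r\equiv0$ along $\Gamma=\{u=v\}$, the tangential derivative $\partial_u r+\partial_v r$ must vanish there, forcing $\mathcal{F}=\mathcal{G}$ on $\Gamma$.

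The key pointwise observation I would exploit is that the potential term $\mathbf{f}=m^2e^{-2\gamma}\phi^2$ is dominated by the null energy densities. Completing squares in the expressions for $\mathbf{e}$ and $\mathbf{m}$ gives $\mathbf{e}+\mathbf{m}=(e^{-\alpha}\gamma_t+e^{-\beta}\gamma_r)^2+\frac{1}{2}(e^{-\alpha}\phi_t+e^{-\beta}\phi_r)^2+\frac{m^2}{2}e^{-2\gamma}\phi^2$ together with the analogous identity for $\mathbf{e}-\mathbf{m}$; in particular $\mathbf{e}\ge|\mathbf{m}|$ and $0\le\mathbf{f}\le2(\mathbf{e}+\mathbf{m})$, $0\le\mathbf{f}\le2(\mathbf{e}-\mathbf{m})$. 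Hence the right-hand sides of \eqref{3.13} and \eqref{3.14} satisfy $|e^{\mathcal{F}}re^{\beta}(\mathbf{e}+\mathbf{m}-\mathbf{f})|\le 3\,e^{\mathcal{F}}re^{\beta}(\mathbf{e}+\mathbf{m})$ and $|e^{\mathcal{G}}re^{\beta}(\mathbf{e}-\mathbf{m}-\mathbf{f})|\le 3\,e^{\mathcal{G}}re^{\beta}(\mathbf{e}-\mathbf{m})$, so it suffices to bound the null fluxes of the nonnegative densities $\mathbf{e}\pm\mathbf{m}$.

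Next I would bound those fluxes. Applying Stokes' theorem to the divergence-free current $P_T$ (Proposition \ref{prop3.1}) on the region bounded by $\Sigma_0$, a constant-$u$ (resp.\ constant-$v$) characteristic, and a later slice, with the contribution at $r=\infty$ vanishing by asymptotic flatness, and using that $\mathbf{e}\pm\mathbf{m}\ge0$ makes the characteristic flux sign-definite, one obtains $\int e^{\mathcal{F}}re^{\beta}(\mathbf{e}+\mathbf{m})\,dv\le C\,E(0)$ along each outgoing ray and $\int e^{\mathcal{G}}re^{\beta}(\mathbf{e}-\mathbf{m})\,du\le C\,E(0)$ along each ingoing ray, with $C$ depending only on $c_{\beta}^{+}$. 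Crucially these flux bounds already carry the weights $e^{\mathcal{F}}$ and $e^{\mathcal{G}}$, so the estimates decouple and no bootstrap in $\mathcal{F},\mathcal{G}$ is required.

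Finally I would integrate. For any point the ingoing ray $\{v=\text{const}\}$ meets $\Sigma_0$ (because $v=T+R\ge0$), so integrating \eqref{3.14} from $\Sigma_0$ and inserting the flux bound together with $\mathbf{f}\le2(\mathbf{e}-\mathbf{m})$ yields $|\mathcal{F}-\beta|\le C\,E(0)$ everywhere, hence $c_{\mathcal{F}}^{-}\le\mathcal{F}\le c_{\mathcal{F}}^{+}$. For $\mathcal{G}$ I integrate \eqref{3.13} along the outgoing ray $\{u=\text{const}\}$: when $u\le0$ this ray meets $\Sigma_0$, and when $u>0$ it meets the axis $\Gamma$, where $\mathcal{G}=\mathcal{F}$ is already bounded; the same flux bound and $\mathbf{f}\le2(\mathbf{e}+\mathbf{m})$ then give $c_{\mathcal{G}}^{-}\le\mathcal{G}\le c_{\mathcal{G}}^{+}$. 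All constants depend only on the initial data through $E(0)$ and $c_{\beta}^{\pm}$. The one genuinely delicate point is the mass/potential term $\mathbf{f}$, which is not itself a flux; the inequalities $\mathbf{f}\le2(\mathbf{e}\mp\mathbf{m})$ are exactly what reduce it to the energy flux already under control.
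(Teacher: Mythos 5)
Your proposal is correct and takes essentially the same route as the paper: you integrate \eqref{3.13}--\eqref{3.14} along characteristics from the initial surface (or, for $\mathcal{G}$ when $u>0$, from the axis, where you prove the same identity $\mathcal{F}=\mathcal{G}$ that the paper derives), dominate the potential term $\mathbf{f}$ pointwise by $\mathbf{e}\pm\mathbf{m}$, and bound the weighted integrands by the null fluxes of $P_T$, themselves controlled by $E(0)$ via Stokes' theorem and Proposition \ref{prop3.2}. Your remark that the flux bounds already carry the weights $e^{\mathcal{F}}$ and $e^{\mathcal{G}}$, so that no bootstrap is needed, is precisely the role played in the paper's proof by the identity $2\lambda=\mathcal{F}+\mathcal{G}$ in \eqref{3.15}.
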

\begin{proof}
Integrating \eqref{3.14} with the fact that $\mathcal{F}=\mathcal{G}=\beta$ on the initial Cauchy surface, we obtain
\begin{displaymath}
2\mathcal{F}(u,v)-2\beta(-v,v)=\int_{-v}^u e^{\mathcal{G}} r e^{\beta} (\mathbf{e}-\mathbf{m}-\mathbf{f})(u',v) d u'.
\end{displaymath}
Next, note that
\begin{displaymath}
-\frac{e^{2\lambda}}{2}=g(\partial_u,\partial_v)=\frac{e^{\mathcal{F}+\mathcal{G}}}{4} g(\tilde{n},\tilde{l})=-\frac{e^{\mathcal{F}+\mathcal{G}}}{2}
\end{displaymath}
and we infer
\begin{equation} \label{3.15}
2\lambda=\mathcal{F}+\mathcal{G}.
\end{equation}
Thus, we have
\begin{displaymath}
2\mathcal{F}(u,v)-2\beta(-v,v)=\int_{-v}^u e^{-\mathcal{F}} r e^{\beta} (\mathbf{e}-\mathbf{m}-\mathbf{f})e^{2\lambda} d u'.
\end{displaymath}
Noting the fact that $|\mathbf{e}\pm \mathbf{m}-\mathbf{f}| \leq \mathbf{e} \pm \mathbf{m}$ and $-v \leq u$,  using Proposition 3.2, we obtain
\begin{displaymath}
|\mathcal{F}(u,v)| \lesssim c+\int_{-v}^u e^{-\mathcal{F}} (\mathbf{e}-\mathbf{m}) r e^{2\lambda} du'.
\end{displaymath}
Since $du=-e^{-\mathcal{G}}\tilde{l}$, we obtain
\begin{displaymath}
|\mathcal{F}(u,v)| \lesssim c+\int_{-v}^u dv(P_T) r e^{2\lambda} du'.
\end{displaymath}
After integration in $\theta$, the right-hand sides are bounded by fluxes which in turn are bounded by the energy, and hence
\begin{displaymath}
|\mathcal{F}| \leq c.
\end{displaymath}

For $\mathcal{G}$, if $u \leq 0$, we can integrate \eqref{3.13} from the initial surface and get the uniform bound in a similar way as we did for $\mathcal{F}$.

If $u>0$, we need some normalisation on $\Gamma$ to estimate $\mathcal{G}$. Noting that we have $u=v$, which infers $R=r=0$ and $u=v=T$. In view of \eqref{4.1.1} and \eqref{4.1.2}, on $\Gamma$, there holds
\begin{eqnarray*}
\left(
\begin{matrix}
\partial_t u & \partial_r u  \\
\partial_t v & \partial_r v
\end{matrix}\right)
&=&\left(
\begin{matrix}
T_t &T_t \\
T_t & -T_t
\end{matrix}\right)
\end{eqnarray*}
which implies
\begin{eqnarray*}
\left(
\begin{matrix}
\partial_u t & \partial_v t  \\
\partial_u r & \partial_v r
\end{matrix}\right)
&=& \frac{1}{2}\left(
\begin{matrix}
{T_t}^{-1} &{T_t}^{-1} \\
{T_t}^{-1} & -{T_t}^{-1}
\end{matrix}\right)
\end{eqnarray*}
on $\Gamma$.

Thus, we have
\begin{displaymath}
\partial_v r=-\partial_u r, \, \tilde{l}(r)=1,\, \tilde{n}(r)=-1
\end{displaymath}
on $\Gamma$. And by Lemma 4.1, we have
\begin{displaymath}
\partial_u r=-e^{\mathcal{G}-\beta},\quad \partial_v r=e^{\mathcal{F}-\beta}.
\end{displaymath}
Therefore, we obtain that on $\Gamma$ there holds
$$\mathcal{G}=\mathcal{F}.$$

Now we can integrate \eqref{3.13} from the axis,
\begin{eqnarray*}
2\mathcal{G}(u,v)-2\mathcal{G}(u,u)&=& 2\mathcal{G}(u,v)-2\mathcal{F}(u,u) \\
&=&\int_u^v e^{\mathcal{F}} r e^{\beta} (\mathbf{e}+\mathbf{m}-\mathbf{f})(u,v') d v'\\
&=&\int_u^v e^{-\mathcal{G}} r e^{\beta} (\mathbf{e}+\mathbf{m}-\mathbf{f})e^{2\lambda} d v'
\end{eqnarray*}
Noting the fact that $|\mathbf{e}\pm \mathbf{m}-\mathbf{f}| \leq \mathbf{e} \pm \mathbf{m}$ and $u \leq v$, using Proposition 3.2, we deduce
\begin{displaymath}
|\mathcal{G}(u,v)| \lesssim c+\int_u^v e^{-\mathcal{G}} (\mathbf{e}+\mathbf{m}) r e^{2\lambda} dv',
\end{displaymath}
Since $dv=-e^{-\mathcal{F}}\tilde{n}$, we obtain
\begin{eqnarray*}
|\mathcal{G}(u,v)| &\lesssim& c+\int_u^v du(P_T) r e^{2\lambda} dv' \\
& \leq & c.
\end{eqnarray*}

This finishes the proof of the lemma.
\end{proof}

\begin{corr} \label{cor3.7}
There exist constants $c_{\lambda}^{-}$ and $c_{\lambda}^{+}$ depending only on the initial energy and the universal constants such that the following uniform bounds hold on the metric function $\lambda$ in null coordinates
\begin{equation} \label{3.17}
c_{\lambda}^{-} \leq \lambda \leq c_{\lambda}^{+}.
\end{equation}
\end{corr}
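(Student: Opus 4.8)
The plan is to obtain this as an immediate consequence of the preceding lemma. The key observation is that $\lambda$ is not an independent quantity: equation \eqref{3.15}, derived from computing $g(\partial_u,\partial_v)$ in two ways, gives the exact algebraic identity
\begin{displaymath}
2\lambda=\mathcal{F}+\mathcal{G}.
\end{displaymath}
Hence any pointwise bound on $\mathcal{F}$ and $\mathcal{G}$ transfers directly to a pointwise bound on $\lambda$, with no further analysis of the evolution equations required.

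Concretely, I would first recall \eqref{3.15}, then invoke Lemma \ref{lem3.5}, which supplies the uniform bounds $c_{\mathcal{F}}^{-}\leq\mathcal{F}\leq c_{\mathcal{F}}^{+}$ and $c_{\mathcal{G}}^{-}\leq\mathcal{G}\leq c_{\mathcal{G}}^{+}$, all depending only on the initial energy and universal constants. Adding these inequalities and dividing by two yields
\begin{displaymath}
\frac{c_{\mathcal{F}}^{-}+c_{\mathcal{G}}^{-}}{2}\leq\lambda\leq\frac{c_{\mathcal{F}}^{+}+c_{\mathcal{G}}^{+}}{2}.
\end{displaymath}
Setting $c_{\lambda}^{-}:=\tfrac{1}{2}(c_{\mathcal{F}}^{-}+c_{\mathcal{G}}^{-})$ and $c_{\lambda}^{+}:=\tfrac{1}{2}(c_{\mathcal{F}}^{+}+c_{\mathcal{G}}^{+})$ then gives \eqref{3.17}, and since $\mathcal{F},\mathcal{G}$ depend only on the initial energy and universal constants, so do $c_{\lambda}^{\pm}$.

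There is essentially no obstacle at this stage; the substantive work has already been carried out in establishing Lemma \ref{lem3.5}, where the genuinely delicate point was bounding $\mathcal{G}$ in the region $u>0$, which forced the integration of \eqref{3.13} to be started from the axis $\Gamma$ rather than from the initial surface and required the normalization $\mathcal{G}=\mathcal{F}$ on $\Gamma$. Given that lemma and the uniform metric bounds of Proposition \ref{prop3.2}, the corollary is a one-line algebraic consequence of \eqref{3.15}.
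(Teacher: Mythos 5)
Your proof is correct and matches the paper's exactly: the paper also derives \eqref{3.17} immediately from the identity $2\lambda=\mathcal{F}+\mathcal{G}$ in \eqref{3.15} together with the uniform bounds on $\mathcal{F}$ and $\mathcal{G}$ from Lemma \ref{lem3.5}. Your explicit choice $c_{\lambda}^{\pm}=\tfrac{1}{2}(c_{\mathcal{F}}^{\pm}+c_{\mathcal{G}}^{\pm})$ merely spells out what the paper leaves implicit.
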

\begin{proof}
This follows immediately from \eqref{3.15} and Lemma 4.2.
\end{proof}

Now, by \eqref{3.12}, we can write the Jacobian $\mathbf{J}$ of the transition functions between $(t,r,\theta)$ and $(v,u,\theta)$,
\begin{eqnarray*}
\mathbf{J} &:=&\left(
\begin{matrix}
\partial_v t & \partial_u t & \partial_{\theta} t \\
\partial_v r & \partial_u r & \partial_{\theta} r \\
\partial_v \theta & \partial_u \theta & \partial_{\theta} \theta
\end{matrix}\right) \\
&=& \frac{1}{2}\left(
\begin{matrix}
e^{\mathcal{F}-\alpha} &e^{\mathcal{G}-\alpha} &0 \\
e^{\mathcal{F}-\beta} &-e^{\mathcal{G}-\beta} & 0 \\
0 & 0& 2
\end{matrix}\right)
\end{eqnarray*}
then the inverse Jacobian ${\mathbf{J}}^{-1}$ is given by
\begin{eqnarray*}
{\mathbf{J}}^{-1}&=&\left(
\begin{matrix}
\partial_t v& \partial_r v & \partial_{\theta} v \\
\partial_t u & \partial_r u & \partial_{\theta} u \\
\partial_t \theta & \partial_r \theta & \partial_{\theta} \theta
\end{matrix}\right) \\
&=& \left(
\begin{matrix}
e^{-\mathcal{F}+\alpha} &e^{-\mathcal{F}+\beta} &0 \\
e^{-\mathcal{G}+\alpha} &-e^{-\mathcal{G}+\beta} & 0 \\
0 & 0& 1
\end{matrix}\right)
\end{eqnarray*}
Therefore,
\begin{displaymath}
dv=e^{(-\mathcal{F}+\alpha)}dt+e^{(-\mathcal{F}+\beta)}dr,\quad du=e^{(-\mathcal{G}+\alpha)}dt-e^{(-\mathcal{G}+\beta)}dr.
\end{displaymath}
\begin{corr} \label{cor3.6}
There exist constants $c_{\mu\nu}^{-},c_{\mu\nu}^{+}$ and $C_{\mu\nu}^{-},C_{\mu\nu}^{+}$ depending only on the initial data and the universal constants such that all the entries of the Jacobian $\mathbf{J}$ and its inverse ${\mathbf{J}}^{-1}$ are uniformly bounded
\begin{displaymath}
c_{\mu\nu}^{-} \leq {\mathbf{J}}_{\mu\nu} \leq c_{\mu\nu}^{+}
\end{displaymath}
\begin{displaymath}
C_{\mu\nu}^{-} \leq {\mathbf{J}}_{\mu\nu}^{-1} \leq C_{\mu\nu}^{+}
\end{displaymath}
for $\mu,\nu=0,1,2$.
\end{corr}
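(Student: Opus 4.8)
The plan is to read the bounds directly off the explicit matrix expressions for $\mathbf{J}$ and $\mathbf{J}^{-1}$ displayed just above the statement, and to feed them the uniform bounds already proved for the four metric-type functions $\alpha$, $\beta$, $\mathcal{F}$ and $\mathcal{G}$. The key observation is that every nonzero entry of $\mathbf{J}$ and of $\mathbf{J}^{-1}$ is, up to the fixed numerical factor $\tfrac12$ or $1$ and a fixed sign, an exponential $e^{L}$ whose exponent $L$ is one of the eight linear combinations
\begin{displaymath}
\mathcal{F}-\alpha,\ \mathcal{G}-\alpha,\ \mathcal{F}-\beta,\ \mathcal{G}-\beta,\ -\mathcal{F}+\alpha,\ -\mathcal{F}+\beta,\ -\mathcal{G}+\alpha,\ -\mathcal{G}+\beta,
\end{displaymath}
the remaining entries being either $0$ or the constant $\partial_\theta\theta=1$.

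First I would invoke Proposition 3.2, which supplies $c_\beta^-\le\beta\le c_\beta^+$ and $c_\alpha^-\le\alpha\le c_\alpha^+$, together with Lemma 4.2, which supplies $c_{\mathcal{F}}^-\le\mathcal{F}\le c_{\mathcal{F}}^+$ and $c_{\mathcal{G}}^-\le\mathcal{G}\le c_{\mathcal{G}}^+$, all with constants depending only on the initial data and universal constants. Each exponent $L$ in the list above is then a fixed $\pm1$-combination of two of these bounded functions, hence is itself trapped between two constants $L^-\le L\le L^+$ of the same dependence. Since $x\mapsto e^x$ is monotone increasing, I obtain $e^{L^-}\le e^{L}\le e^{L^+}$, a two-sided positive bound for the modulus of each nonzero entry; multiplying by the fixed factor $\tfrac12$ or $1$ and restoring the fixed sign then produces the asserted constants $c_{\mu\nu}^\pm$ and $C_{\mu\nu}^\pm$. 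The vanishing entries and the entry $\partial_\theta\theta=1$ are bounded trivially.

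There is essentially no analytic obstacle here: the entire content of the corollary is carried by the earlier estimates of Proposition 3.2 and Lemma 4.2, and what remains is pure bookkeeping—matching each matrix entry to its exponent and tracking constants. The only point that warrants a word of care is that several entries are negative (for instance the $(2,2)$-entry $-\tfrac12 e^{\mathcal{G}-\beta}$ of $\mathbf{J}$), so the two-sided bounds must be stated for the \emph{signed} quantity rather than for its absolute value. This is harmless, since the sign of each entry is fixed throughout the evolution, and a bound on $e^{L}$ therefore translates directly into a correctly ordered pair of signed bounds $c_{\mu\nu}^-\le\mathbf{J}_{\mu\nu}\le c_{\mu\nu}^+$.
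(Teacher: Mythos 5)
Your proposal is correct and matches the paper's proof, which likewise derives the corollary directly from the bounds on $\alpha,\beta$ in Proposition 3.2 and on $\mathcal{F},\mathcal{G}$ in Lemma 4.2; your added remark about the fixed signs of the negative entries is a sound piece of bookkeeping the paper leaves implicit.
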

\begin{proof}
The proof follows from Proposition 3.2 and Lemma 4.2.
\end{proof}
\begin{corr} \label{cor3.8}
For the scalar functions $r,R$, there exist constants $c_1,c_2$ such that the following pointwise estimates hold
\begin{displaymath}
r \geq c_1 R, \quad r \leq c_2 R.
\end{displaymath}
\end{corr}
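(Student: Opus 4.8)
The plan is to integrate the radius function outward from the axis along a null ray and to control the integrand with the uniform estimates already in hand. The starting point is the expression for the derivatives of $r$ in the null frame, read off from \eqref{3.12} and the Jacobian $\mathbf{J}$ computed above, namely
\begin{displaymath}
\partial_v r = \tfrac{1}{2} e^{\mathcal{F}-\beta}, \qquad \partial_u r = -\tfrac{1}{2} e^{\mathcal{G}-\beta}.
\end{displaymath}
By Proposition \ref{prop3.2} and Lemma \ref{lem3.5}, the functions $\mathcal{F}$, $\mathcal{G}$ and $\beta$ are all uniformly bounded, so there exist positive constants $a,A$ with
\begin{displaymath}
a \leq \partial_v r \leq A, \qquad a \leq -\partial_u r \leq A
\end{displaymath}
throughout the domain $\{u \leq v\}$. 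In particular $\partial_v r$ is bounded away from zero and from infinity.

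Next I would exploit the boundary condition \eqref{4.1.2}, which states that $u=v$ on the axis $\Gamma$; in null coordinates this says precisely that $r(u,u)=0$ for every $u$. Fixing an interior point $(u,v)$ with $u<v$ and integrating $\partial_v r$ along the ray of constant $u$ from the axis value $v'=u$ up to $v'=v$ gives
\begin{displaymath}
r(u,v) = \int_u^v \partial_v r(u,v')\, dv' = \int_u^v \tfrac{1}{2} e^{\mathcal{F}-\beta}(u,v')\, dv'.
\end{displaymath}
The two-sided bound on the integrand then yields $a(v-u) \leq r(u,v) \leq A(v-u)$, and since $R=(v-u)/2$ this is exactly $2aR \leq r \leq 2AR$; one may therefore take $c_1=2a$ and $c_2=2A$.

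I do not expect a serious obstacle here: the whole statement reduces to the positivity and boundedness of $\partial_v r$, which is immediate from the uniform bounds of the preceding results. The only points needing care are that the constant-$u$ ray issuing from the axis stays inside the physical region $\{u\le v\}$ (it does, since $v'$ increases from $u$) and that $r$ genuinely vanishes on $\Gamma$. As a consistency check one could instead integrate $-\partial_u r$ along a ray of constant $v$ and reach the same conclusion with the roles of $\mathcal{F}$ and $\mathcal{G}$ interchanged, which also confirms that the two constants depend only on the initial data through the bounds of Proposition \ref{prop3.2} and Lemma \ref{lem3.5}.
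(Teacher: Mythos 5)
Your proof is correct and takes essentially the same route as the paper: both arguments rest on the uniform $L^{\infty}$ bounds for the Jacobian entries (Proposition \ref{prop3.2} and Lemma \ref{lem3.5}, as packaged in Corollary \ref{cor3.6}) together with the fundamental theorem of calculus starting from the axis, where \eqref{4.1.2} forces $r(u,u)=0$. The only cosmetic difference is that you integrate a two-sided bound on $\partial_v r=\frac{1}{2}e^{\mathcal{F}-\beta}$ along a constant-$u$ null ray, whereas the paper bounds $|\partial_R r|$ and $|\partial_r R|$ as in \eqref{3.18}--\eqref{3.19} and integrates in $R$; the constants depend on the same quantities in either version.
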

\begin{proof}
By the $L^{\infty}$ estimate for the Jacobian and its inverse we have done in Corollary 4.4, we can obviously get the following estimates,
\begin{equation} \label{3.18}
|\partial_{R} r |=|\partial_v r-\partial_u r|=\frac{1}{2} |e^{\mathcal{F}-\beta}+e^{\mathcal{G}-\beta}| \leq c_1,
\end{equation}
\begin{equation} \label{3.19}
|\partial_{r} R |=\frac{1}{2}|\partial_r v-\partial_r u |=\frac{1}{2} |e^{-\mathcal{G}+\beta}+e^{-\mathcal{F}+\beta}| \leq c_2,
\end{equation}
The proof then follows by applying the fundamental theorem of calculus.
\end{proof}

With the use of Corollary 4.4, we can now construct a global null coordinate system, and we will consider our problem under this coordinate.

 \subsection{The Einstein equations in null coordinates}
In null coordinates, the components of the Einstein tensor take the following form
\begin{eqnarray*}
&&G_{00}=-e^{2\lambda}r^{-1} \partial_u(e^{-2\lambda}\partial_u r), \\
&&G_{01}=r^{-1}\partial_u\partial_v r, \\
&&G_{11}=-e^{2\lambda}r^{-1} \partial_v(e^{-2\lambda}\partial_v r), \\
&&G_{22}=-4r^2 e^{-2\lambda}\partial_u\partial_v \lambda.
\end{eqnarray*}
Other components are zero.

Thus, rewritting the Einstein-wave-Klein-Gordon system \eqref{2.10}-\eqref{2.15} in null coordinates, we can get
\begin{equation} \label{1.12}
\partial_u(e^{-2\lambda} \partial_u r)=-e^{-2\lambda} r(2{\gamma_u}^2+{\phi_u}^2)
\end{equation}
\begin{equation} \label{1.13}
r_{uv}=\frac{m^2}{4}r e^{2\lambda-2\gamma} {\phi}^2
\end{equation}
\begin{equation} \label{1.14}
\partial_v(e^{-2\lambda} \partial_v r)=-e^{-2\lambda} r(2{\gamma_v}^2+{\phi_v}^2)
\end{equation}
\begin{equation} \label{1.15}
\lambda_{uv}=-\gamma_u \gamma_v-\frac{1}{2} \phi_u \phi_v+\frac{m^2}{8} e^{2\lambda-2\gamma} {\phi}^2
\end{equation}
\begin{equation} \label{1.16}
\partial_u (r \partial_v \gamma)+\partial_v (r \partial_u \gamma)=\frac{m^2}{4}re^{2\lambda-2\gamma} {\phi}^2
\end{equation}
\begin{equation} \label{1.17}
\partial_u (r \partial_v \phi)+\partial_v (r \partial_u \phi)=-\frac{m^2}{2}r e^{2\lambda-2\gamma} \phi
\end{equation}
with stress-energy tensor,
\begin{eqnarray*}
&&T_{00}=2{\gamma_u}^2 +{\phi_u}^2, \\
&&T_{01}=\frac{m^2}{4} e^{2\lambda-2\gamma} {\phi}^2, \\
&&T_{11}=2{\gamma_v}^2 +{\phi_v}^2, \\
&&T_{22}=4r^2 e^{-2\lambda}\gamma_u \gamma_v+2r^2e^{-2\lambda} \phi_u \phi_v-r^2 e^{-2\gamma}\frac{m^2}{2}{\phi}^2.
\end{eqnarray*}

We can also rewrite the system in $(T,R)$ coordinates which reads
\begin{equation} \label{1.6}
\frac{r_T}{r} \lambda_T+\frac{r_R}{r} \lambda_R-\frac{r_{RR}}{r}={\gamma_T}^2+{\gamma_R}^2+\frac{1}{2}{\phi_T}^2+\frac{1}{2}{\phi_R}^2+e^{2\lambda-2\gamma} \frac{m^2}{2}{\phi}^2
\end{equation}
\begin{equation} \label{1.7}
-\frac{r_{TR}}{r}+\frac{r_T}{r} \lambda_R+\frac{r_R}{r} \lambda_T=2\gamma_T \gamma_R+\phi_T \phi_R
\end{equation}
\begin{equation} \label{1.8}
\frac{r_T}{r} \lambda_T+\frac{r_R}{r} \lambda_R-\frac{r_{TT}}{r}={\gamma_T}^2+{\gamma_R}^2+\frac{1}{2}{\phi_T}^2+\frac{1}{2}{\phi_R}^2-e^{2\lambda-2\gamma} \frac{m^2}{2}{\phi}^2
\end{equation}
\begin{equation} \label{1.9}
e^{-2\lambda} \lambda_{RR}-e^{-2\lambda} \lambda_{TT} =-e^{-2\gamma} \frac{m^2}{2}{\phi}^2+e^{-2\lambda}{\gamma_T}^2-e^{-2\lambda}{\gamma_R}^2+\frac{1}{2} e^{-2\lambda} {\phi_T}^2-\frac{1}{2} e^{-2\lambda} {\phi_R}^2
\end{equation}
\begin{eqnarray} \label{1.10}
\Box_{g}\gamma &=& -e^{-2\lambda}\gamma_{TT}+e^{-2\lambda} \gamma_{RR}-e^{-2\lambda}\frac{r_T}{r} \gamma_T+e^{-2\lambda} \frac{r_R}{r} \gamma_R \\ \nonumber
&=& -e^{-2\gamma}\frac{m^2}{2}{\phi}^2
\end{eqnarray}
\begin{eqnarray} \label{1.11}
\Box_{g} \phi &=& -e^{-2\lambda}\phi_{TT}+e^{-2\lambda} \phi_{RR}-e^{-2\lambda}\frac{r_T}{r} \phi_T+e^{-2\lambda} \frac{r_R}{r} \phi_R\\ \nonumber
&=& e^{-2\gamma} m^2 \phi
\end{eqnarray}
with the stress-energy tensor $T_{\mu\nu}$,
\begin{eqnarray*}
&&T_{00}={\gamma_T}^2+{\gamma_R}^2+\frac{1}{2}{\phi_T}^2+\frac{1}{2}  {\phi_R}^2+e^{2\lambda-2\gamma}\frac{m^2}{2}{\phi}^2=e^{2\lambda} \tilde{\mathbf{e}}, \\
&&T_{01}=2\gamma_T \gamma_R+ \phi_T \phi_R, \\
&&T_{11}={\gamma_T}^2+{\gamma_R}^2+\frac{1}{2}{\phi_T}^2+\frac{1}{2} {\phi_R}^2-e^{2\lambda-2\gamma}\frac{m^2}{2}{\phi}^2, \\
&&T_{22}=r^2 e^{-2\lambda}{\gamma_T}^2-r^2 e^{-2\lambda}{\gamma_R}^2+\frac{r^2}{2}e^{-2\lambda}{\phi_T}^2-\frac{r^2}{2}e^{-2\lambda} {\phi_R}^2-r^2 e^{-2\gamma}\frac{m^2}{2}{\phi}^2.
\end{eqnarray*}

Let $U=(\gamma,\phi)$, rewrite equations $(\ref{1.10})(\ref{1.11})$ in the following form on the Minkowski spacetimes $(\mathbb{R}^{1+2},m)$,
\begin{eqnarray} \label{1.18}
\Box_{m} U &=& -U_{TT}+U_{RR}+ \frac{1}{R} U_R\\ \nonumber
&=& e^{2\lambda}F(U){\phi}^2+e^{2\lambda}G(U)\phi+\frac{r_T}{r} U_T-\left(\frac{r_R}{r}-\frac{1}{R}\right) U_R \\ \nonumber
& \triangleq & h.
\end{eqnarray}
\subsection{Estimates on Energy Flux}
In view of the $L^{\infty}$ estimates of the Jacobian and the monotonicity of the energy , we give the following estimates on energy fluxes,
\begin{proposition} \label{prop3.11}
For $0 \leq T_0 \leq T_1$, consider the following space-time region
\begin{displaymath}
\{T_0\leq T \leq T_1\} \subset \mathcal{Q},
\end{displaymath}
where $\mathcal{Q}$ denotes the maximal development. There holds the following estimates
\begin{displaymath}
\int_{2T_0-v}^{\min{(v,2T_1-v)}} \left({(\partial_u \gamma)}^2+{(\partial_u \phi)}^2+{m^2}e^{-2\gamma}\phi^2 \right) (u',v) r du' \lesssim E(0),
\end{displaymath}
\begin{displaymath}
\int_{\max{(u,2T_0-u)}}^{T_1} \left({(\partial_v \gamma)}^2+{(\partial_v \phi)}^2+{m^2}e^{-2\gamma}\phi^2 \right) (u,v') r dv' \lesssim E(0).
\end{displaymath}
\end{proposition}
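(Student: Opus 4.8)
The plan is to read the two integrals as the energy fluxes of the divergence-free current $P_T$ through, respectively, a constant-$v$ and a constant-$u$ null segment lying in the strip $\{T_0\le T\le T_1\}$, and then to bound these fluxes by the conserved total energy $E(0)$ via the divergence theorem. The input from the earlier sections is: $\nabla_\nu P_T^\nu=0$ (Proposition 3.1), the uniform two-sided bounds on $\alpha,\beta$ (Proposition 3.2) and on $\mathcal{F},\mathcal{G}$ (Lemma 4.2), together with the representations $\partial_u=\tfrac12 e^{\mathcal{G}}\tilde n$, $\partial_v=\tfrac12 e^{\mathcal{F}}\tilde l$ and $2\lambda=\mathcal{F}+\mathcal{G}$ from Lemma 4.1 and Corollary 4.3.

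First I would rewrite the flux density in null-derivative form. Completing the square in the definitions of $\mathbf{e}$ and $\mathbf{m}$ gives $\mathbf{e}-\mathbf{m}=(\tilde n\gamma)^2+\tfrac12(\tilde n\phi)^2+\tfrac{m^2}{2}e^{-2\gamma}\phi^2$, and substituting $\tilde n=2e^{-\mathcal{G}}\partial_u$ yields the pointwise identity
\[
\mathbf{e}-\mathbf{m}=4e^{-2\mathcal{G}}(\partial_u\gamma)^2+2e^{-2\mathcal{G}}(\partial_u\phi)^2+\tfrac{m^2}{2}e^{-2\gamma}\phi^2 .
\]
In particular $\mathbf{e}-\mathbf{m}\ge 0$, and by the bounds $c_{\mathcal{G}}^-\le\mathcal{G}\le c_{\mathcal{G}}^+$ of Lemma 4.2 the integrand $(\partial_u\gamma)^2+(\partial_u\phi)^2+m^2e^{-2\gamma}\phi^2$ of the first estimate is comparable to $\mathbf{e}-\mathbf{m}$, with constants depending only on the initial data. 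Using $2\lambda=\mathcal{F}+\mathcal{G}$ together with the expression $\bar{\mu}_v=-\tfrac12 r e^{2\lambda}(du\wedge d\theta)$ recorded before Lemma 4.2, the flux of $P_T$ across a constant-$v$ segment equals $\pi\int e^{\mathcal{G}}(\mathbf{e}-\mathbf{m})\,r\,du$ after integrating out $\theta$; since $e^{\mathcal{G}}$ is bounded below, the first integral in the statement is therefore controlled by a constant multiple of this flux over the indicated range of $u'$.

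It then remains to bound this flux by $E(0)$. Because $P_T$ is divergence free, I would apply the divergence theorem on the characteristic region of $\{T_0\le T\le T_1\}$ bounded by the null segment, by the spacelike slices $\{T=T_0\}$ and $\{T=T_1\}$, and by the axis $\Gamma$: when $v\le T_1$ the segment reaches $\Gamma$ and the region is the triangle cut off by $\{T=T_0\}$ and $\Gamma$, whereas for $v>T_1$ it is the quadrilateral that also meets $\{T=T_1\}$. On $\Gamma$ the relevant area weight carries a factor $r\to 0$ (Corollary 4.5) while $P_T$ stays bounded by smoothness, so the axis boundary term vanishes; hence the flux through the null segment equals, up to sign, a sum of fluxes of $P_T$ across the constant-$T$ caps. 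Each cap flux is the energy of a sub-region of a spacelike slice, hence at most the total energy, which equals $E(0)$ by the conservation of Proposition 3.1 and the nonnegativity of the energy density. Combining this with the comparison of the preceding paragraph gives the first estimate.

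The second estimate follows by the mirror argument in the other null direction: completing the square the other way gives $\mathbf{e}+\mathbf{m}=4e^{-2\mathcal{F}}(\partial_v\gamma)^2+2e^{-2\mathcal{F}}(\partial_v\phi)^2+\tfrac{m^2}{2}e^{-2\gamma}\phi^2$, one uses $\partial_v=\tfrac12 e^{\mathcal{F}}\tilde l$ and $\bar{\mu}_u=\tfrac12 r e^{2\lambda}(dv\wedge d\theta)$, and bounds the flux of $P_T$ through the constant-$u$ ray by $E(0)$ exactly as above, now invoking the bounds on $\mathcal{F}$. The main obstacle I anticipate is the rigorous justification that the boundary contribution on the axis vanishes, and, relatedly, keeping every comparison constant independent of $T_0,T_1$ by appealing only to the uniform estimates of Proposition 3.2 and Lemma 4.2; once this is settled, the case distinction according to whether the null segment terminates on $\Gamma$ or on $\{T=T_1\}$ is routine.
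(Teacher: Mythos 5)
Your proposal is correct and follows essentially the same route as the paper: rewrite $\mathbf{e}\mp\mathbf{m}$ in null-derivative form via $\partial_u=\tfrac12 e^{\mathcal{G}}\tilde n$, $\partial_v=\tfrac12 e^{\mathcal{F}}\tilde l$ and $2\lambda=\mathcal{F}+\mathcal{G}$, identify the stated integrals with the flux of the divergence-free current $P_T$ through the null segments using $\bar{\mu}_v,\bar{\mu}_u$, bound that flux by the conserved energy $E(0)$ via Stokes' theorem, and absorb the exponential weights using the uniform bounds of Proposition 3.2 and Lemma 4.2. The only difference is one of explicitness: you spell out the characteristic regions (triangle versus quadrilateral, vanishing axis contribution), which the paper leaves implicit in its appeal to the Stokes machinery of Sections 3.2--3.3.
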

\begin{proof}
Noting that the vector field $P_T$ is divergence free, using Stokes' theorem, we can get
\begin{displaymath}
\left|  \int_{2T_0-v}^{\min{(v,2T_1-v)}} \int_{\theta=0}^{2\pi} e^{-\mathcal{F}} (\mathbf{e}-\mathbf{m}) {\bar{\mu}}_v \right| \leq E(0)
\end{displaymath}
and
\begin{displaymath}
\left|  \int_{\max {(u,2T_0-u)}}^{T_1} \int_{\theta=0}^{2\pi} e^{-\mathcal{G}} (\mathbf{e}+\mathbf{m}) {\bar{\mu}}_u \right| \leq E(0).
\end{displaymath}
where we use the notations and computations given in Section 4.2. In view of the definition of ${\bar{\mu}}_v$ and ${\bar{\mu}}_u$, and the rotation invariance, we deduce
\begin{displaymath}
 \int_{2T_0-v}^{\min{(v,2T_1-v)}}  e^{-\mathcal{F}} (\mathbf{e}-\mathbf{m}) r e^{2\lambda}du \lesssim E(0)
\end{displaymath}
and
\begin{displaymath}
 \int_{\max {(u,2T_0-u)}}^{T_1}  e^{-\mathcal{G}} (\mathbf{e}+\mathbf{m}) r e^{2\lambda}dv  \lesssim E(0).
\end{displaymath}
Then, by the definition of $\mathbf{e}$ and $\mathbf{m}$ and \eqref{3.12} \eqref{3.15}, we obtain
\begin{displaymath}
 \int_{2T_0-v}^{\min{(v,2T_1-v)}}  e^{\mathcal{G}} \left(e^{-2\mathcal{G}}{(\partial_u \gamma)}^2+e^{-2\mathcal{G}} {(\partial_u \phi)}^2+m^2 e^{-2\gamma} \phi^2\right) r du \lesssim E(0)
\end{displaymath}
and
\begin{displaymath}
 \int_{\max {(u,2T_0-u)}}^{T_1}  e^{\mathcal{F}} \left(e^{-2\mathcal{F}}{(\partial_v \gamma)}^2+e^{-2\mathcal{F}}{(\partial_v \phi)}^2+m^2 e^{-2\gamma} \phi^2 \right) r dv  \lesssim E(0).
\end{displaymath}
Therefore, using Lemma 4.2, we finally obtain the estimates on fluxes,
\begin{displaymath}
 \int_{2T_0-v}^{\min{(v,2T_1-v)}}   \left({(\partial_u \gamma)}^2+ {(\partial_v \phi)}^2+m^2 e^{-2\gamma} \phi^2\right) r du \lesssim E(0)
\end{displaymath}
and
\begin{displaymath}
 \int_{\max {(u,2T_0-u)}}^{T_1}  \left({(\partial_v \gamma)}^2+{(\partial_v \phi)}^2+m^2 e^{-2\gamma} \phi^2 \right) r dv  \lesssim E(0).
\end{displaymath}
\end{proof}

\subsection{Lower bound of $\gamma$}
In this part, we aim to show that $\gamma$ has a lower bound under the given initial conditions. We may achieve this by some translation. An easy observation tells that the 4-tuple $(\lambda+1,r,\gamma+1,\phi)$ also satisfy \eqref{1.6}-\eqref{1.11}. Then, we define
$$
\tilde{\lambda}=\lambda+1,\quad \tilde{\gamma}=\gamma+1.
$$

Now we will prove that $\tilde{\gamma}$ is nonnegative by a bootstrap argument. Firstly, we make a bootstrap assumption
\begin{equation} \label{3.24}
 r^{\frac{1}{2}}\tilde{\gamma} \geq 0.
\end{equation}

The goal will be to improve this bootstrap assumption.
\begin{proposition} \label{prop3.13}
Under the given initial conditions \eqref{1.19} and the bootstrap assumption \eqref{3.24}, for any $T>0,R>0$, there holds
 $$ r^{\frac{1}{2}}\tilde{\gamma} >0.$$
\end{proposition}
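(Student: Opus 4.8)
The plan is to introduce the weighted unknown $w:=r^{1/2}\tilde\gamma$ and to show that it solves a scalar characteristic equation $w_{uv}=Aw+B$ with $A\ge 0$ and $B\ge 0$, so that strict positivity can be read off from a representation formula over the backward domain of dependence. Since $\tilde\gamma-\gamma$ is constant, $\tilde\gamma$ satisfies the same equation \eqref{1.16} as $\gamma$, whose right-hand side is exactly $r_{uv}$ by \eqref{1.13}; thus $\partial_u(r\partial_v\tilde\gamma)+\partial_v(r\partial_u\tilde\gamma)=r_{uv}$. Substituting $\tilde\gamma=r^{-1/2}w$, the first-order cross terms cancel and I obtain
\[
w_{uv}=Aw+B,\qquad A=\tfrac12\,\frac{r_{uv}}{r}-\tfrac14\,\frac{r_u r_v}{r^2},\quad B=\frac{r_{uv}}{2r^{1/2}}.
\]
By Lemma \ref{lem3.4}, $\partial_v r=e^{\mathcal F-\beta}>0$ and $\partial_u r=-e^{\mathcal G-\beta}<0$, while $r_{uv}\ge 0$ by \eqref{1.13}; hence $A\ge0$ and $B\ge0$, and the bootstrap hypothesis \eqref{3.24} gives $w\ge0$, so $Aw+B\ge0$ everywhere.

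Next I would record the data for $w$. On the axis $r=0$, so $w=0$; this Dirichlet condition is what neutralizes the reflection at $\Gamma$. On $\{T=0\}$, where $R=r$, set $w_0(r)=r^{1/2}(\gamma_0(r)+1)$. A short computation with the Jacobian on $t=0$ from the proof of Lemma \ref{lem3.4} gives $\partial_T=e^{\beta-\alpha}\partial_t$, $\partial_R=\partial_r$ and $r_T=0$ there, so $w_T=r^{1/2}e^{\beta-\alpha}\gamma_1\ge0$ by \eqref{1.19}. Most importantly, $w_0'(r)=r^{1/2}\bigl(\tfrac{1}{2r}(\gamma_0+1)+\gamma_0'\bigr)$, and the strict bound $\gamma_0'>-\tfrac12 r^{-1}$ together with $\gamma_0\ge0$ yields $w_0'(r)>\tfrac{\gamma_0}{2r^{1/2}}\ge0$; thus $w_0$ is strictly increasing. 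This is the precise point where the lower bound on $\gamma_r(0,r)$ enters.

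For $P=(u_0,v_0)$ with $u_0=T_0-R_0$, $v_0=T_0+R_0$, I would integrate $w_{uv}=Aw+B$ over the backward domain of dependence $\mathcal D$ of $P$, using $\partial_v w=\tfrac12(w_T+w_0')$ on $\{T=0\}$. If $T_0\le R_0$, then $\mathcal D=\{-u_0\le v\le v_0,\ -v\le u\le u_0\}$ rests entirely on $\{T=0\}$ and the computation gives
\[
w(P)=\tfrac12 w_0(v_0)+\tfrac12 w_0(-u_0)+\tfrac12\!\int_{-u_0}^{v_0}\! w_T\,dv+\iint_{\mathcal D}(Aw+B),
\]
whose terms are all nonnegative and whose first two are strictly positive. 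If $T_0>R_0$, then the characteristic $u=u_0$ meets $\Gamma$ at $Q=(u_0,u_0)$, the region $\mathcal D=\{u_0\le v\le v_0,\ -v\le u\le u_0\}$ touches the axis only at $Q$, and using $w(Q)=0$ I get
\[
w(P)=\tfrac12\bigl(w_0(v_0)-w_0(u_0)\bigr)+\tfrac12\!\int_{u_0}^{v_0}\! w_T\,dv+\iint_{\mathcal D}(Aw+B).
\]
The flux and source integrals are nonnegative, the axis contribution has dropped out, and $w_0(v_0)-w_0(u_0)>0$ since $0<u_0<v_0$ and $w_0$ is strictly increasing; hence $w(P)>0$ in both cases, improving \eqref{3.24} to $r^{1/2}\tilde\gamma>0$.

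The genuinely delicate case is $T_0>R_0$, where the backward cone reflects off the axis: there pointwise positivity cannot come from $\gamma_0\ge0$ alone, and one must instead exploit the \emph{monotonicity} of the initial profile $w_0$ — this is exactly why \eqref{1.19} demands the strict inequality $\gamma_0'>-\tfrac12 r^{-1}$ rather than mere nonnegativity of $\gamma_0$. A secondary technical issue is the degeneracy of the weight at $\Gamma$, where $A\sim r^{-2}$ and $w\sim r^{1/2}$; the area integral $\iint_{\mathcal D}Aw$ should be justified by first integrating over $\{r\ge\varepsilon\}$ and letting $\varepsilon\to0$, using the smoothness of $\gamma$ and the uniform bounds of Proposition \ref{prop3.2} and Corollary \ref{cor3.7}.
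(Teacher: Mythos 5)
Your proposal is correct and follows essentially the paper's own route: the same weighted unknown $r^{1/2}\tilde\gamma$ vanishing on the axis, the same sign information ($r_{uv}\ge 0$ from \eqref{1.13}, $r_u<0<r_v$, $\gamma_1\ge 0$), the same representation formula (your characteristic trapezoid identity in $(u,v)$ is, region for region, exactly the paper's reflected d'Alembert formula in $(T,R)$), and the same decisive use of the strict monotonicity of $r^{1/2}(\gamma_0+1)$ forced by $\gamma_0'>-\tfrac12 r^{-1}$ to handle the interior case $T>R$ where the axis reflection kills the pointwise positivity of the data. Incidentally, your computation is slightly more careful than the paper's displayed wave equation for $r^{1/2}\tilde\gamma$, which omits the term $\tfrac{r_{uv}}{2r}\,w$ from your coefficient $A$ (equivalently $\tfrac{m^2}{2}e^{2\lambda-2\gamma}r^{1/2}\phi^2\,\tilde\gamma$ in the $(T,R)$ form) — a harmless slip, since that term is nonnegative under the bootstrap and only reinforces the conclusion.
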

\begin{proof}
Multiplying \eqref{1.10} by $r^{\frac{1}{2}}$, we can verify that
\begin{displaymath}
{(r^{\frac{1}{2}} \tilde{\gamma})}_{TT}-{(r^{\frac{1}{2}} \tilde{\gamma})}_{RR}=e^{2\tilde{\lambda}-2\tilde{\gamma}} \frac{m^2}{2} r^{\frac{1}{2}} {\phi}^2-r^{-\frac{3}{2}} (r_u \cdot r_v) \tilde{\gamma}
\end{displaymath}
In view of the normalisation on the initial surface in 4.2, we see that $r^{\frac{1}{2}}\tilde{\gamma}$ is the solution of the above 1-dimensional wave equation with initial-boundary conditions
\begin{displaymath}
\begin{cases}
T=0: \, r^{\frac{1}{2}}\tilde{\gamma}=R^{\frac{1}{2}} (\gamma_0+1), {(r^{\frac{1}{2}} \tilde{\gamma})}_T=R^{\frac{1}{2}}e^{\beta-\alpha}\gamma_1, \\
R=0: \,  r^{\frac{1}{2}}\tilde{\gamma}=0.
\end{cases}
\end{displaymath}
where $\gamma_0,\gamma_1$ are the initial data of $\gamma$ and $\gamma_t$.

Then, when $R \geq T$, we have
\begin{eqnarray*}
(r^{\frac{1}{2}}\tilde{\gamma}) (T,R)&=&  \frac{1}{2} {(R+T)}^{\frac{1}{2}} (\gamma_0(R+T)+1) + \frac{1}{2} {(R-T)}^{\frac{1}{2}} (\gamma_0(R-T)+1)\\
&&+\frac{1}{2} \int_{R-T}^{R+T}{\xi}^{\frac{1}{2}} e^{\beta-\alpha}(0,\xi) \gamma_1(\xi) d \xi+\frac{1}{2} \int_0^T \int_{R-(T-\tau)}^{R+(T-\tau)} e^{2\tilde{\lambda}-2\tilde{\gamma}} \frac{m^2}{2} r^{\frac{1}{2}} {\phi}^2 d \xi d \tau \\
&&-\frac{1}{2}\int_0^T \int_{R-(T-\tau)}^{R+(T-\tau)}r^{-\frac{3}{2}} (r_u \cdot r_v) \tilde{\gamma} d \xi d\tau,
\end{eqnarray*}
and when $R<T$, we have
\begin{eqnarray*}
(r^{\frac{1}{2}}\tilde{\gamma}) (T,R)&=&  \frac{1}{2} {(R+T)}^{\frac{1}{2}} (\gamma_0(R+T)+1) - \frac{1}{2} {(T-R)}^{\frac{1}{2}} (\gamma_0(T-R)+1)\\
&&+\frac{1}{2} \int_{T-R}^{T+R} {\xi}^{\frac{1}{2}}e^{\beta-\alpha}(0,\xi) \gamma_1(\xi) d \xi+\frac{1}{2} \int_{0}^{T} \int_{|R-(T-\tau)|}^{R+(T-\tau)} e^{2\tilde{\lambda}-2\tilde{\gamma}} \frac{m^2}{2} r^{\frac{1}{2}} \phi^2 d \xi d \tau\\
&&-\frac{1}{2}\int_{0}^{T} \int_{|R-(T-\tau)|}^{R+(T-\tau)}r^{-\frac{3}{2}} (r_u \cdot r_v) \tilde{\gamma} d \xi d\tau.
\end{eqnarray*}
Firstly, we consider the lower bound of the case $R \geq T$. By Proposition 3.2, the initial conditions \eqref{1.19}, the bootstrap assumption \eqref{3.24}, the calculation of the Jacobian, and the equalities above, we have
\begin{eqnarray*}
r^{\frac{1}{2}}\tilde{\gamma} &\geq &  \frac{1}{2} {(R+T)}^{\frac{1}{2}} (\gamma_0(R+T)+1)+\frac{1}{2} {(R-T)}^{\frac{1}{2}}(\gamma_0(R-T)+1)\\
&&+\frac{1}{2} \int_{R-T}^{R+T}{\xi}^{\frac{1}{2}}e^{\beta-\alpha}(0,\xi) \gamma_1(\xi) d \xi+\frac{1}{2} \int_0^T \int_{R-(T-\tau)}^{R+(T-\tau)} e^{2\tilde{\lambda}-2\tilde{\gamma}} \frac{m^2}{2} r^{\frac{1}{2}} \phi^2 d \xi d \tau \\
&\geq & \frac{1}{2} {(R+T)}^{\frac{1}{2}} (\gamma_0(R+T)+1)+\frac{1}{2} {(R-T)}^{\frac{1}{2}}(\gamma_0(R-T)+1) \\
& >& 0
\end{eqnarray*}
Then, for the case $R<T$, we can estimate similarly,
$$
r^{\frac{1}{2}}\tilde{\gamma} \geq   \frac{1}{2} {(R+T)}^{\frac{1}{2}} (\gamma_0(R+T)+1) - \frac{1}{2} {(T-R)}^{\frac{1}{2}}(\gamma_0(T-R)+1)
$$
Noting that by \eqref{1.19}, on the initial surface, we have
\begin{eqnarray*}
\partial_r (r^{\frac{1}{2}}(\gamma+1)) &=& r^{\frac{1}{2}} \gamma_r+\frac{1}{2} r^{-\frac{1}{2}}(\gamma+1) \\
& \geq & r^{\frac{1}{2}} \gamma_r+\frac{1}{2} r^{-\frac{1}{2}} \\
&>& 0,
\end{eqnarray*}
if $r>0$. Therefore,
$$
r^{\frac{1}{2}}\tilde{\gamma} >0.
$$
when $R<T$.

Thus, we have shown that
\begin{displaymath}
 r^{\frac{1}{2}} \tilde{\gamma} >0.
\end{displaymath}
This finishes the proof of the proposition.
\end{proof}
The above proposition close the bootstrap argument and yields the nonnegativity of $\tilde{\gamma}$, which shows that $\gamma$ has a lower bound $\gamma \geq -1$.

\subsection{Existence of null coordinates in a light cone}
Now we consider a cone $J^{-}(O)$ with $O$ a point on the axis. As is done before, we can similarly construct a null coordinate system in $J^{-}(O)$ with different initial boundary conditions,
\begin{equation} \label{4.6.1}
 \begin{cases}
t=0: \tilde{u}=-\tilde{v}\\
r=0: \tilde{u}=t,\,\tilde{v}=t
 \end{cases}
 \end{equation}
The process of constructing such a coordinate system is just similar to what we have done in 4.1 and 4.2, and obviously the equations take the same form in this coordinate as in 4.3. The estimates we did before in section 4 still hold in the new null coordinate system.

This coordinate will be used only in the second part of our work of proving the global existence of the Einstein-wave-Klein-Gordon system. For simplicity, we still denote $(\tilde{u},\tilde{v})$ by $(u,v)$. Here, we give two more properties.
\begin{corr} \label{cor3.14}
There exist constants $c_1,c_2,c_3,c_4$ such that
\begin{displaymath}
r \geq c_1 R,\quad t \geq c_3 T,
\end{displaymath}
\begin{displaymath}
r \leq c_2 R, \quad t \leq c_4 T
\end{displaymath}
hold in $J^{-}(O)$.
\end{corr}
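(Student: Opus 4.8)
The plan is to reproduce the argument of Corollary~\ref{cor3.8} and supplement it with an analogous estimate relating $t$ and $T$. Since, as noted at the start of Section~4.6, the null--coordinate construction there leads to the same field equations \eqref{1.12}--\eqref{1.17} and to the same expressions for the Jacobian as in Section~4.2, the uniform bounds of Proposition~\ref{prop3.2} (for $\alpha,\beta$), of Lemma~\ref{lem3.5} (for $\mathcal{F},\mathcal{G}$) and of Corollary~\ref{cor3.7} (for $\lambda$) remain valid inside $J^-(O)$; in particular the entries of $\mathbf{J}$ and $\mathbf{J}^{-1}$ are again trapped above and below as in Corollary~\ref{cor3.6}. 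Everything then reduces to the fundamental theorem of calculus applied in the two coordinate directions.

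For the pair $r,R$ the argument is identical to that of Corollary~\ref{cor3.8}. Using Lemma~\ref{lem3.4} one has
\[
\partial_R r=\partial_v r-\partial_u r=\tfrac12 e^{-\beta}\left(e^{\mathcal{F}}+e^{\mathcal{G}}\right),
\]
which by the bounds above lies between two strictly positive constants. Since $r=0$ on the axis $\{R=0\}$ (the boundary condition $r=0:\tilde u=\tilde v=t$ in \eqref{4.6.1} gives $R=0$ there), integrating $\partial_R r$ at fixed $T$ from the axis out to $(T,R)$, a segment which stays in $J^-(O)$, yields $c_1 R\le r\le c_2 R$.

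For the new pair $t,T$ I would integrate in the $T$ direction instead. From \eqref{4.6.1}, on the initial surface $t=0$ one has $\tilde u=-\tilde v$, i.e. $T=\tfrac12(\tilde u+\tilde v)=0$; hence $t=0$ on $\{T=0\}$. Moreover, again by Lemma~\ref{lem3.4},
\[
\partial_T t=\partial_u t+\partial_v t=\tfrac12 e^{-\alpha}\left(e^{\mathcal{F}}+e^{\mathcal{G}}\right),
\]
which is likewise trapped between two strictly positive constants. For a point $(T,R)\in J^-(O)$ the vertical segment $\{(T',R):0\le T'\le T\}$ remains inside $J^-(O)$, because the spatial width of the past light cone is non-increasing in $T$, so the uniform bounds are available along the whole segment; integrating $\partial_T t$ from $t(0,R)=0$ then gives $c_3 T\le t\le c_4 T$, completing the proof.

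The part that needs genuine care---rather than the routine $r,R$ estimate copied from Corollary~\ref{cor3.8}---is the $t,T$ comparison, for two reasons. First, $t$ does \emph{not} vanish on the axis (there $t=T$), so one cannot integrate outward in $R$ as one does for $r$; the correct anchor is the initial surface $\{T=0\}$ supplied by the new boundary data \eqref{4.6.1}. Second, one must confirm that the integration segment stays within $J^-(O)$, where alone the uniform bounds of Proposition~\ref{prop3.2} and Lemma~\ref{lem3.5} have been established, which is exactly what the monotone shape of the cone guarantees. Granting the transfer of these $L^\infty$ bounds to the cone coordinates (asserted at the beginning of Section~4.6, since the equations and Jacobian formulas are unchanged), both chains of inequalities follow at once.
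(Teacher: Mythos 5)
Your proposal is correct, and the $r$--$R$ half coincides with the paper's argument (both rest on the Jacobian bounds transferred to the cone coordinates, i.e.\ on Proposition~\ref{prop3.2} and Lemma~\ref{lem3.5}, followed by the fundamental theorem of calculus as in Corollary~\ref{cor3.8}). The $t$--$T$ half, however, is genuinely routed differently. The paper integrates \emph{radially from the axis}: it bounds $|\partial_R t|=\frac12|e^{\mathcal{F}-\alpha}-e^{\mathcal{G}-\alpha}|$ and $|\partial_r T|=\frac12|e^{-\mathcal{F}+\beta}-e^{-\mathcal{G}+\beta}|$, uses the normalisation $t=T$ on $\Gamma$ coming from \eqref{4.6.1}, and then converts the additive estimates $|t-T|\lesssim R$ and $|T-t|\lesssim r$ into the multiplicative comparison by invoking the two auxiliary geometric facts it lists explicitly, namely $R\leq T$ in $J^-(O)$ and $r\lesssim t$ (the latter from the boundedness of $\alpha,\beta$). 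You instead integrate \emph{temporally from the initial surface}, using that $\partial_T t=\frac12 e^{-\alpha}\left(e^{\mathcal{F}}+e^{\mathcal{G}}\right)$ is pinched between two positive constants and that $t=0$ on $\{T=0\}$; this yields the two-sided bound $c_3 T\leq t\leq c_4 T$ in one stroke and dispenses with the auxiliary inputs $R\leq T$ and $r\lesssim t$ altogether, which is a modest simplification. Your verification that the vertical segment stays in $J^-(O)$ is the right check and holds since the cone is $\{T+R\leq T_O\}$ in these coordinates. One small point you should make explicit: the boundary condition in \eqref{4.6.1} only gives the inclusion $\{t=0\}\subset\{T=0\}$; the identification you use ($t=0$ wherever $T=0$) follows because $\partial_t T=\frac12\left(e^{-\mathcal{F}+\alpha}+e^{-\mathcal{G}+\alpha}\right)>0$, so $T$ is strictly increasing in $t$ at fixed $r$ and $T>0$ for $t>0$ --- a one-line remark, but without it the anchoring of your $T$-integration is incomplete. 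With that added, both chains of inequalities go through exactly as you describe.
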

\begin{proof}
Similar to Corollary 4.5, using \eqref{3.18}\eqref{3.19}, we can obtain
\begin{equation*}
c_1 R \leq r \leq c_2 R.
\end{equation*}
Moreover, by the $L^{\infty}$ Estimate for the Jacobian, we have
\begin{equation}
|\partial_{T} t |=|\partial_v t+\partial_u t|=\frac{1}{2} |e^{\mathcal{F}-\alpha}+e^{\mathcal{G}-\alpha}| \leq {c'}_3,
\end{equation}
\begin{equation}
|\partial_{R} t|=|\partial_v t-\partial_u t|=\frac{1}{2} |e^{\mathcal{F}-\alpha}-e^{\mathcal{G}-\alpha}| \leq {c'}_4,
\end{equation}
\begin{equation}
|\partial_{t} T|=\frac{1}{2}|\partial_t v+\partial_t u|=\frac{1}{2} |e^{-\mathcal{F}+\alpha}+e^{-\mathcal{G}+\alpha}| \leq c_5,
\end{equation}
\begin{equation}
|\partial_{r} T|=\frac{1}{2}|\partial_r v+\partial_r u|=\frac{1}{2} |e^{-\mathcal{F}+\beta}-e^{-\mathcal{G}+\beta}| \leq c_6.
\end{equation}
Noting that in $J^{-}(O)$ there holds $R \leq T$, and $r \lesssim t$ owing to the fact that $\alpha$ and $\beta$ are bounded, then, as in Corollary 4.5, the proof follows by applying the fundamental theorem of calculus in the region $J^{-}(O)$.
\end{proof}
To work in the new coordinate system later, we need an energy estimate in $(T,R) $ coordinates.
We can similarly define energy in $(T,R)$ coordinates,
\begin{displaymath}
{\tilde{E}}^O(T):=\int_{{\tilde{\Sigma}_T} \cap {J^-(O)}} \tilde{\mathbf{e}} {\bar{\mu}}_{\tilde{q}}
\end{displaymath}
where $(\tilde{\Sigma}_T,\tilde{q})$ denotes the Cauchy surface. Then, we have
\begin{proposition} \label{prop3.15}
For $0 \leq T_0 \leq T_O$, we have that in $J^-(O)$, the following estimates hold
\begin{displaymath}
{\tilde{E}}^O(T_0) \lesssim E(0).
\end{displaymath}
\end{proposition}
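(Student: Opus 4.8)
\emph{Plan.} The statement is a domain-of-dependence energy estimate, so the plan is to realize $\tilde{E}^O(T_0)$, up to uniformly bounded constants, as the flux of the divergence-free current $P_T$ through the spacelike slice $\tilde{\Sigma}_{T_0}\cap J^-(O)$, and then to run the same divergence-theorem argument as in Proposition 3.3, but foliating $J^-(O)$ by the level sets of $T$ instead of $t$.

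First I would establish \emph{comparability of the two energy densities}. Writing the future unit normal to $\{T=\text{const}\}$ in the null frame as $n=e^{-\lambda}\partial_T=e^{-\lambda}(\partial_u+\partial_v)$, one has $\tilde{\mathbf{e}}=T(n,n)=e^{-2\lambda}(T_{uu}+2T_{uv}+T_{vv})$, while by Lemma 4.1 the old unit normal is $T=e^{-\alpha}\partial_t=e^{-\mathcal{G}}\partial_u+e^{-\mathcal{F}}\partial_v$, so the flux density of $P_T$ through $\{T=\text{const}\}$ is $T(T,n)=e^{-\lambda}\bigl(e^{-\mathcal{G}}T_{uu}+(e^{-\mathcal{G}}+e^{-\mathcal{F}})T_{uv}+e^{-\mathcal{F}}T_{vv}\bigr)$. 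The crucial point is that in null coordinates $T_{uu}=2{\gamma_u}^2+{\phi_u}^2$, $T_{vv}=2{\gamma_v}^2+{\phi_v}^2$ and $T_{uv}=\frac{m^2}{4}e^{2\lambda-2\gamma}\phi^2$ are \emph{all nonnegative}, so both densities are nonnegative combinations of $T_{uu},T_{uv},T_{vv}$ with coefficients $e^{-\mathcal{F}},e^{-\mathcal{G}},e^{-\lambda}$; by the uniform bounds of Lemma 4.2 and Corollary 4.3 these coefficients lie between two positive constants. Together with the uniform bounds on the Jacobian and on $r$ from Corollaries 4.4 and 4.6, this yields $\tilde{E}^O(T_0)\lesssim \mathrm{Flux}_{T=T_0}(P_T)$.

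Next I would apply \emph{Stokes' theorem} to $P_T$ on the frustum $\mathcal{R}=\{0\le T\le T_0\}\cap J^-(O)$, whose boundary consists of the top slice $\tilde{\Sigma}_{T_0}^O$, the initial slice $\tilde{\Sigma}_0^O$, the portion of the backward null cone $\partial J^-(O)$, and the axis $R=0$. Since $\nabla_\nu P_T^\nu=0$ by Proposition 3.1, the total boundary flux vanishes. By the initial conditions \eqref{4.6.1} the set $\{T=0\}$ coincides with $\{t=0\}$, where $\mathcal{F}=\mathcal{G}=\lambda=\beta$ and hence $n=T$, so the flux through $\tilde{\Sigma}_0^O$ is exactly $E^O(0)\le E(0)$. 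Exactly as in Proposition 3.3 the null-cone flux equals $-\int(\mathbf{e}-\mathbf{m})\,\bar{\mu}_{\tilde{n}}\le 0$ because $\mathbf{e}\ge|\mathbf{m}|$, and the axis contribution vanishes since its area element is proportional to $r\to0$ with all fields regular there. Collecting signs gives $\mathrm{Flux}_{T=T_0}(P_T)\le E^O(0)\le E(0)$, and combining with the previous step yields $\tilde{E}^O(T_0)\lesssim E(0)$.

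The main obstacle is the comparability step: one must control the hyperbolic angle between the two unit normals, which reduces to showing $-g(T,n)=\cosh\bigl(\tfrac{\mathcal{F}-\mathcal{G}}{2}\bigr)$ is uniformly bounded, and this is precisely what Lemma 4.2 supplies (via $2\lambda=\mathcal{F}+\mathcal{G}$). A secondary technical point is the rigorous justification of the divergence theorem on the frustum: I would excise a thin neighborhood of the axis, and of the vertex $O$ in the borderline case $T_0=T_O$, check that the excised fluxes tend to zero using Corollary 4.6 and the regularity of $\gamma,\phi,r,\lambda$, and then pass to the limit. (Alternatively, the flux estimates of Proposition 3.11 could be invoked to bound the null-derivative contributions directly, but the divergence-theorem route keeps the favorable sign of the cone flux transparent.)
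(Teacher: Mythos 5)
Your proof is correct, and its core coincides with the paper's: both arguments hinge on the same comparability computation, namely that the flux density of the divergence-free current $P_T$ through a $\{T=\mathrm{const}\}$ slice satisfies $-\langle P_T,\mathbf{n}\rangle\geq C\,\tilde{\mathbf{e}}$, which follows from writing $T=e^{-\mathcal{G}}\partial_u+e^{-\mathcal{F}}\partial_v$, $\mathbf{n}=e^{-\lambda}(\partial_u+\partial_v)$, the nonnegativity of $T_{uu},T_{uv},T_{vv}$, and the uniform bounds of Lemma \ref{lem3.5} together with $2\lambda=\mathcal{F}+\mathcal{G}$ from \eqref{3.15}. Where you differ is the choice of spacetime region for Stokes' theorem: the paper integrates over the \emph{future} region $\tilde{K}(T_0,s)=\{T_0\leq T,\ t\leq s\}\cap J^-(O)$, so the slice $\{T=T_0\}$ is the past boundary and the remaining boundary contributions (the $\{t=s\}$ slice and the null cone) are controlled by the energy monotonicity of Proposition \ref{prop3.3} and the flux estimates of Proposition \ref{prop3.11}; you instead integrate over the \emph{past} frustum $\{0\leq T\leq T_0\}\cap J^-(O)$, so the slice $\{T=T_0\}$ is part of the future boundary, the initial slice contributes exactly $E^O(0)\leq E(0)$ (using that $\{T=0\}=\{t=0\}$ under \eqref{4.6.1}, so the two unit normals coincide there), and the null-cone flux enters with a definite favorable sign as in Proposition \ref{prop3.3}, with no quantitative flux bound needed. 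Your route is thus marginally more self-contained --- it bypasses Proposition \ref{prop3.11} entirely and makes the sign structure transparent --- at the cost of the extra (correctly flagged) technical care in justifying the divergence theorem near the axis and near the vertex in the borderline case $T_0=T_O$; the paper's version, by working above $T_0$, folds these degeneracies into the already-established flux machinery. Your parenthetical observation that Proposition \ref{prop3.11} could alternatively be invoked directly is in fact precisely what the paper does.
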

\begin{proof}
Firstly, the proposition holds at the initial time. Then, for $T_0>0$, we will use Stokes' theorem for $P_T$ on the region
\begin{displaymath}
\tilde{K}(T_0,s):=\{T_0 \leq T, t  \leq s\}  \cap J^-(O).
\end{displaymath}
Using Stokes' theorem and the estimates on energy flux, we can get
\begin{equation} \label{4.6.2}
-\int_{{\tilde{\Sigma}_{T_0}} \cap {J^-(O)}} \langle P_T, \mathbf{n} \rangle {\bar{\mu}}_{\tilde{q}} \lesssim \varepsilon
\end{equation}
with $\mathbf{n}$ the unit normal vector,
\begin{displaymath}
\mathbf{n}=-e^{-\lambda} \partial_T=-e^{-\lambda}(\partial_u+\partial_v)
\end{displaymath}
Then, by Lemma 4.1, we have
\begin{displaymath}
\mathbf{n}=-\frac{(e^{-\lambda+\mathcal{F}-\alpha}+e^{-\lambda+\mathcal{G}-\alpha})}{2} \partial_t-\frac{(e^{-\lambda+\mathcal{F}-\beta}-e^{-\lambda+\mathcal{G}-\beta})}{2}\partial_r.
\end{displaymath}
Therefore,
\begin{displaymath}
-\langle P_T, \mathbf{n} \rangle=\frac{e^{-\lambda+\mathcal{F}}}{2}(\mathbf{e}+\mathbf{m})+\frac{e^{-\lambda+\mathcal{G}}}{2}(\mathbf{e}-\mathbf{m}).
\end{displaymath}
In view of the definition of $\mathbf{e}$ and $\mathbf{m}$ and \eqref{3.12} \eqref{3.15}, the $L^{\infty}$ Estimate for the Jacobian, Corollary 4.8, we imply
\begin{eqnarray*}
-\langle P_T, \mathbf{n} \rangle&=&\frac{e^{-\lambda+\mathcal{F}}}{2}(e^{-2\mathcal{F}}{(\partial_v \gamma)}^2+e^{-2\mathcal{F}}{(\partial_v \phi)}^2+m^2 e^{-2\gamma} \phi^2)\\
&&+\frac{e^{-\lambda+\mathcal{G}}}{2}(e^{-2\mathcal{G}}{(\partial_u \gamma)}^2+e^{-2\mathcal{G}}{(\partial_u \phi)}^2+m^2 e^{-2\gamma} \phi^2) \\
& \geq & C({(\partial_v \gamma)}^2+{(\partial_v \phi)}^2+{(\partial_u \gamma)}^2+{(\partial_u \phi)}^2+m^2 e^{-2\gamma} \phi^2) \\
& \geq & C \tilde{\mathbf{e}}.
\end{eqnarray*}
Thus, by \eqref{4.6.2}, we have
\begin{displaymath}
{\tilde{E}}^O(T_0) \lesssim \varepsilon.
\end{displaymath}
which concludes the proof of the proposition.
\end{proof}

\section{The first singularity occurs on the axis}
In this section, we prove that the first possible singularity must occur on the axis. This is equivalent to show that the Cauchy problem admits a regular solution away from the axis.
\begin{proposition} \label{prop4.1}
For any $(\bar{u},\bar{v})$ away from the axis, the solution of the system \eqref{1.6}-\eqref{1.11} is regular at $(\bar{u},\bar{v})$.
\end{proposition}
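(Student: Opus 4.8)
The plan is to prove a local a priori estimate in a small characteristic rectangle at the vertex $(\bar u,\bar v)$ and conclude that every derivative stays bounded there, so that the solution extends smoothly across $(\bar u,\bar v)$. It suffices to show that if the solution is smooth for all $T<\bar T:=(\bar u+\bar v)/2$, then it extends smoothly to $(\bar u,\bar v)$. Write $\bar R=(\bar v-\bar u)/2>0$, which is positive precisely because the point is off the axis. For $\epsilon>0$ small I would work on the null rectangle $D=[\bar u-\epsilon,\bar u]\times[\bar v-\epsilon,\bar v]$. Every point of $D$ other than the vertex lies at time $(u+v)/2<\bar T$, hence in the smooth region, and for $\epsilon$ small one has $R\ge \bar R-\epsilon>0$ on $D$, so $D$ stays away from the axis. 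Then by Corollaries \ref{cor3.6} and \ref{cor3.8} the quantities $r$, $1/r$, $r_u/r$, $r_v/r$ are uniformly bounded on $D$; by Corollary \ref{cor3.7} so is $e^{2\lambda}$; and by Proposition \ref{prop3.13} we have $\gamma\ge-1$, so $e^{-2\gamma}$ is bounded above. The two past edges $\{u=\bar u-\epsilon\}$ and $\{v=\bar v-\epsilon\}$ of $D$ sit at times $\le\bar T-\epsilon/2<\bar T$, where the solution is smooth, and thus furnish smooth bounded data.

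Next I would recast the matter equations \eqref{1.16} and \eqref{1.17} in null form. Using $r_{uv}=\tfrac{m^2}{4}re^{2\lambda-2\gamma}\phi^2$ from \eqref{1.13} they become $\gamma_{uv}=\tfrac{1}{2r}\bigl(r_{uv}-r_u\gamma_v-r_v\gamma_u\bigr)$ and $\phi_{uv}=\tfrac{1}{2r}\bigl(-\tfrac{m^2}{2}re^{2\lambda-2\gamma}\phi-r_u\phi_v-r_v\phi_u\bigr)$, i.e. semilinear wave equations in which the first derivatives enter \emph{linearly} with coefficients bounded on $D$ by the previous step. The crucial a priori input is Proposition \ref{prop3.11}: since $r$ is bounded below on $D$, it gives $\int\gamma_u^2\,du,\int\phi_u^2\,du$ along each $v=\mathrm{const}$ segment and $\int\gamma_v^2\,dv,\int\phi_v^2\,dv$ along each $u=\mathrm{const}$ segment, all bounded by $E(0)$. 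By Cauchy--Schwarz this upgrades to uniform $L^1$ bounds $\int|\gamma_u|\,du,\int|\phi_u|\,du,\int|\gamma_v|\,dv,\int|\phi_v|\,dv\lesssim E(0)^{1/2}$ on $D$. Integrating $\gamma_u$ and $\phi_u$ in $u$ from the smooth edge $u=\bar u-\epsilon$ then shows that $\gamma$ and $\phi$ are themselves bounded on $D$; in particular the sources $r_{uv}$ and $\tfrac{m^2}{2}re^{2\lambda-2\gamma}\phi$ are bounded.

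With bounded sources and coefficients I would bound the first derivatives up to the vertex. Reading $\partial_u\gamma_v=\tfrac{r_{uv}}{2r}-\tfrac{r_u}{2r}\gamma_v-\tfrac{r_v}{2r}\gamma_u$ as a linear ODE in $u$ along $v=\bar v$, the forcing $\tfrac{r_{uv}}{2r}-\tfrac{r_v}{2r}\gamma_u$ is integrable (its first term is bounded, its second is controlled by $\int|\gamma_u|\,du\lesssim E(0)^{1/2}$), the coefficient $\tfrac{r_u}{2r}$ is bounded, and the datum at $u=\bar u-\epsilon$ is finite, so Grönwall bounds $\gamma_v(\bar u,\bar v)$. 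Symmetrically, reading $\partial_v\gamma_u=\gamma_{uv}$ as an ODE in $v$ along $u=\bar u$ and using $\int|\gamma_v|\,dv\lesssim E(0)^{1/2}$ bounds $\gamma_u(\bar u,\bar v)$, and the same scheme applied to \eqref{1.17} bounds $\phi_u,\phi_v$. Thus $(\gamma,\phi,\gamma_u,\gamma_v,\phi_u,\phi_v)$ is uniformly bounded on all of $D$, vertex included.

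Finally, to reach full smoothness I would differentiate the system \eqref{1.12}--\eqref{1.17} and rerun the same characteristic/Grönwall scheme: once the first derivatives are bounded, the equations for the second and higher derivatives are again linear in their top derivatives, with coefficients and sources assembled from already-controlled lower-order quantities, so a simultaneous induction on the order of differentiation yields uniform bounds for all derivatives on $D$, and the solution extends smoothly to $(\bar u,\bar v)$. The main obstacle is the third paragraph: the energy flux gives only $L^2$ (integrated) control of the first derivatives, and passing to pointwise bounds up to the vertex forces one to use the null structure precisely, namely that each first derivative evolves along its own characteristic, the \emph{transverse} derivative entering only through a flux-controlled, hence integrable, term while the \emph{tangential} one enters linearly and is absorbed by Grönwall. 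The delicate point is to keep the coupling of $\gamma$ and $\phi$ (and their influence through $\lambda$ and $r$) from destroying this separation, and to check that the constants do not degenerate as the top vertex approaches the singular time $\bar T$.
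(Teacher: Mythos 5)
Your proposal is correct and follows essentially the same route as the paper's proof: localize to a small characteristic region at the vertex off the axis, use the flux estimates of Proposition \ref{prop3.11} together with Corollaries \ref{cor3.6}, \ref{cor3.7}, \ref{cor3.8} and the lower bound $\gamma \geq -1$ from Proposition \ref{prop3.13} to bound $\gamma,\phi$ pointwise, then integrate the null-form equations \eqref{4.1}--\eqref{4.2} transversally to bound the first derivatives, the only cosmetic difference being that you absorb the tangential derivative by Gr\"onwall where the paper bootstraps $X=\sup|U_v|$ via $X\lesssim M+eX$ using the smallness $e$ of the region. Your closing step (bounding $\partial\lambda$ via \eqref{1.15} inside a simultaneous induction to higher orders) matches, and in fact spells out slightly more explicitly, the paper's concluding argument.
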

\begin{proof}
Away from the axis, the equations become a $1+1$ dimensional system. Now we denote the maximal development by $\mathcal{Q}$, without loss of generality, we consider $(\bar{u},\bar{v})\in \bar{\mathcal{Q}} \setminus \mathcal{Q}$. For $(\bar{u},\bar{v})\in \bar{\mathcal{Q}} \setminus \mathcal{Q}$, there exists a small light cone $\mathcal{C}_{(\bar{u},\bar{v})}$ away from the axis with vertex at $(\bar{u},\bar{v})$ satisfying
\begin{displaymath}
T_0 \leq T \leq T_1,\, 0<R_0 \leq R \leq R_1, \quad \forall (T,R) \in \mathcal{C}_{(\bar{u},\bar{v})},
\end{displaymath}
and
\begin{displaymath}
T_1-T_0<<1,\, R_1-R_0<<1.
\end{displaymath}
and
\begin{displaymath}
(\mathcal{C}_{(\bar{u},\bar{v})} \setminus \{ (\bar{u},\bar{v}) \}) \subset \mathcal{Q}.
\end{displaymath}
We denote the small scale of the light cone by $e$.

Now, by estimates on energy flux, we have
\begin{eqnarray*}
|U(\bar{u},\bar{v})| &\leq & |U(-\bar{v},\bar{v})|+ \int_{-\bar{v}}^{\bar{u}} |U_u| du \\
& \lesssim & C_0+{(\int_{-\bar{v}}^{\bar{u}} {|U_u|}^2 du)}^{\frac{1}{2}} \\
& \lesssim & C_0+{(\int_{-\bar{v}}^{\bar{u}} {|U_u|}^2 Rdu)}^{\frac{1}{2}} \\
& \leq & C(\bar{u},\bar{v}).
\end{eqnarray*}

Then,we rewrite the Einstein equations \eqref{1.16} and \eqref{1.17},
\begin{equation} \label{4.1}
\gamma_{uv}=-\frac{r_u}{2r} \gamma_v -\frac{r_v}{2r} \gamma_u+\frac{m^2}{8} e^{2\lambda-2\gamma} \phi^2,
\end{equation}
\begin{equation} \label{4.2}
\phi_{uv}=-\frac{r_u}{2r} \phi_v -\frac{r_v}{2r} \phi_u-\frac{m^2}{4} e^{2\lambda-2\gamma} \phi.
\end{equation}
Let
\begin{displaymath}
X=\sup\limits_{\mathcal{C}_{(\bar{u},\bar{v})}} |U_v|.
\end{displaymath}
Integrating \eqref{4.1} from $u_0$ to $u$, where $(u_0,v)$ on the initial hypersurface of $\mathcal{C}_{(\bar{u},\bar{v})}$ and $(u,v) \in \mathcal{C}_{(\bar{u},\bar{v})}$, we obtain
\begin{eqnarray*}
|\gamma_v| & \leq & M+X\int_{u_0}^u |\frac{r_u}{2r}| du +\int_{u_0}^u |\frac{r_v}{2r} \gamma_u| du+ \int_{u_0}^u \frac{m^2}{8} e^{2\lambda-2\gamma} \phi^2 du \\
& \lesssim & M+X\int_{u_0}^u du+\int_{u_0}^u | \gamma_u| du+ \varepsilon \\
& \lesssim & M+eX+{\left(\int_{u_0}^u {| \gamma_u|}^2 r du\right)}^{\frac{1}{2}} \\
& \lesssim & M+eX.
\end{eqnarray*}
Then we integrate \eqref{4.2} from $u_0$ to $u$, noting that $\gamma$ has a lower bound, we obtain
\begin{eqnarray*}
|\phi_v| & \leq & M+X\int_{u_0}^u |\frac{r_u}{2r}| du +\int_{u_0}^u |\frac{r_v}{2r} \phi_u| du+ \int_{u_0}^u \frac{m^2}{4} e^{2\lambda-2\gamma} \phi du \\
& \lesssim & M+X\int_{u_0}^u du+\int_{u_0}^u | \phi_u|+|e^{-2\gamma}\phi| du \\
& \lesssim & M+eX+{\left(\int_{u_0}^u ({| \phi_u|}^2+e^{-2\gamma} \phi^2) r du\right)}^{\frac{1}{2}} \\
& \lesssim & M+eX+E(0) \\
& \lesssim & M+eX.
\end{eqnarray*}
Thus,
\begin{displaymath}
X \lesssim M+eX
\end{displaymath}
which implies
\begin{displaymath}
X \lesssim M.
\end{displaymath}
We can similarly get upper bounds for $\partial_u U$. Therefore,
\begin{displaymath}
|\partial U| \lesssim M.
\end{displaymath}

Now we integrate \eqref{1.15} along the ingoing light ray, we get
\begin{eqnarray*}
|\lambda_v| & \lesssim & M+\int_{u_0}^u |\gamma_u||\gamma_v| du +\int_{u_0}^u |\phi_u||\phi_v| du+ \int_{u_0}^u \frac{m^2}{8} e^{2\lambda-2\gamma} \phi^2 du \\
& \lesssim & M+M{\left(\int_{u_0}^u ({| \gamma_u|}^2+{|\phi_u|}^2) r du\right)}^{\frac{1}{2}}+E(0) \\
& \lesssim & M.
\end{eqnarray*}
The same estimate holds for $\partial_u \lambda$. Then,
\begin{displaymath}
|\partial \lambda| \lesssim M.
\end{displaymath}
Thus, it must hold that $(\bar{u},\bar{v}) \in \mathcal{Q}$. This finished the proof of Proposition 5.1.
\end{proof}

\section*{Acknowledgement}
Both authors are grateful to Prof. Naqing Xie for fruitful discussions. The first author especially thanks him for his kind guidance.

Y. Zhou was supported by Key Laboratory of Mathematics for Nonlinear Sciences (Fudan University), Ministry of Education of China, P.R.China. Shanghai Key Laboratory for Contemporary Applied Mathematics, School of Mathematical Sciences, Fudan University, P.R. China, NSFC (grants No. 11421061, grants No.11726611, grants No. 11726612), 973 program (grant No. 2013CB834100) and 111 project.


\begin{thebibliography}{MA}
 \bibitem{alinhac}  S. Alinhac, Geometric Analysis of Hyperbolic Differential Equations: An Introduction, Cambridge University Press, 2010.
 \bibitem{Andersson} L. Andersson, N. Gudapati and J. Szeftel, Global regularity for the 2+1 dimensional equivariant Einstein-wave map system, \textit{Ann. PDE.}, {\bf 3},  142 pp, 2017.
  \bibitem{berger} B. K. Berger, P. T. Chru\'{s}ciel and V. Moncrief, On the "Asymptotically Flat" Space-Times with $G_2$-invariant Cauchy Surfaces, \textit{Ann. Phys.}, {\bf 237}, 322-354, 1995.
  \bibitem{choquet1}  Y. Choquet-Bruhat, General Relativity and the Einstein Equations, Oxford Science Press, 2009.
 \bibitem{choquet2}  Y. Choquet-Bruhat and R. Geroch, Global aspects of the Cauchy problem in general relativity, \textit{Comm. Math. Phys.}, {\bf 14}, 329-335, 1969.
  \bibitem{choquet3}  Y. Choquet-Bruhat and V. Moncrief, Future Global in Time Einsteinian Spacetimes with U(1) Isometry Group, \textit{Ann. Henri Poincar\'{e}}, {\bf 2}, 1007-1064, 2001.
 \bibitem{christodoulou1} D. Christodoulu, The Problem of a Self-Gravitating Scalar Field, \textit{Comm. Math. Phys.}, {\bf 105}, 337-361, 1986.
\bibitem{christodoulou2} D. Christodoulu and A.S. Tahvildar-Zadeh, On the regularity of spherically symmetric wave maps, \textit{Comm. Pure Appl. Math.}, {\bf 46}, 1041-1091, 1993.
 \bibitem{Dafermos}  M. Dafermos, Spherically symmetric spacetimes with a trapped surface, \textit{ Class. Quantum Gravity}, {\bf 22}, 2221-2232, 2005.
\bibitem{geba}  D. Geba and M. Grillakis, An Introduction to the theory of wave maps and related geometric problems, New Jersey: World Scientific, 2017.
 \bibitem{grillakis} M. Grillakis, Classical slutions for the equivariant wave maps in 1+2 dimensions, preprint, 1991.
\bibitem{gudapati} N. Gudapati, A note on the dimensional reduction of axisymmetric spacetimes, preprint, 2017.
 \bibitem{cecile1} C. Huneau, Constraint equations for 3+1 vacuum Einstein equations with a translational space-like Killing field in the asymptotically flat case, \textit{Ann. Henri Poincar\'{e}}, {\bf 17}(2), 271-299, 2016.
\bibitem{cecile2} C. Huneau, Constraint equations for 3+1 vacuum Einstein equations with a translational space-like Killing field in the asymptotically flat case \uppercase\expandafter{\romannumeral2}, \textit{Asymptot. Anal.}, {\bf 96}(1), 51-89, 2016.
\bibitem{cecile3} C. Huneau, Stability in exponential time of Minkowski space-time with a translation space-like Killing field, \textit{Ann. PDE.}, {\bf 2}(1), 151 pp, 2016.
\bibitem{Klainerman } S. Klainerman and F. Nicol\`{o}, The Evolution Problem in General Relativity, Progr. Math. Physics 25. Boston: Birkh\"{a}user, 2003.
\bibitem{Kommemi} J. Kommemi, The Global Structure of Spherically Symmetric Charged Scalar Field Spacetime, \textit{Comm. Math. Phys.}, {\bf 323}, 35-106, 2013.
   \bibitem{lee} J. Lee and T. Parker, The Yamabe Problem, \textit{Bull. Amer. Math. Soc. (N. S. )}, {\bf 17}(1), 37-91, 1987.
  \bibitem{lefloch} P. Lefloch and Y. Ma, The global nonlinear stability of Minkowski space for self-gravitating massive fields, \textit{Comm. Math. Phys.}, {\bf 346}(2), 603-665, 2016.
 \bibitem{Moncrief1} V. Moncrief, Reduction of Einstein's Equations for Vacuum Spave-Times with Spacelike U(1) Isometry Groups, \textit{Ann. of Phys.}, {\bf 167}, 118-142, 1986.
 \bibitem{Moncrief2} V. Moncrief, Reflections on the U(1) problem in general relativity, \textit{J. Fixed Point Theory Appl. }, {\bf 14}(2), 397-418, 2013.
 \bibitem{Rendall}  Alan D. Rendall, Partial Differential Equations in Genreal Relativity, Oxford University Press, 2008.
 \bibitem{shatah1} J. Shatah and M. Struwe, Geometric Wave Equations, Volume 2 of Courant Lecture Notes in Mathematics, 1998.
\bibitem{shatah2}  J. Shatah and A. S. Tahvildar-Zadeh, Regularity of harmonic maps from the Minkowski space into  rotationally symmetric manifolds, \textit{Comm. Pure Appl. Math}, {\bf 45}(8), 947-971, 1992.
\bibitem{shatah3}  J. Shatah and A. S. Tahvildar-Zadeh, On the Cauchy problem for equivariant wave maps, \textit{Comm. Pure Appl. Math.}, {\bf 47}(5), 719-754, 1994.
 \bibitem{Struwe1} M. Struwe, Equivariant wave maps in two space dimensions, \textit{Comm. Pure Appl. Math.}, {\bf 56}, 0815-0823, 2003.
\bibitem{Struwe2}M. Struwe, Radially symmetric wave maps form (1+2)-dimensional Minkowski space to general targets, \textit{ Calc. Var.}, {\bf 16}, 431-437, 2003.
\bibitem{Struwe3} M. Struwe, Radially symmetric wave maps from (1+2)-dimensional Minkowski space to the sphere, \textit{Math. Z.}, {\bf 242}, 407-414, 2002.
\end{thebibliography}
\end{document}